\documentclass{amsart}

\usepackage{amssymb}
\usepackage{amsthm}
\usepackage{cases}
\usepackage{amsfonts}
\usepackage{mathrsfs}
\usepackage{amsmath}
\usepackage{extarrows}
\usepackage{color}
% ----------------------------------------------------------------
\vfuzz2pt % Don't report over-full v-boxes if over-edge is small
\hfuzz2pt % Don't report over-full h-boxes if over-edge is small
% THEOREMS -------------------------------------------------------
\newtheorem{theorem}{Theorem}[section]

\newtheorem{lemma}[theorem]{Lemma}

\newtheorem*{remark}{Remark}
\numberwithin{equation}{section}

%\renewcommand{\baselinestretch}{1.5}
%
%
%\newenvironment{example}{\begin{list}{}{\leftmargin.25in \rightmargin.25in}
%\item{} \parindent=0pt \begin{expl} }{\qed \end{expl}\end{list} \medskip}

 %MATH -----------------------------------------------------------

\newcommand{\RePt}{\mathrm{Re}\,}
\newcommand{\ImPt}{\mathrm{Im}\,}

\newcommand{\calU}{\mathcal{U}}

\newcommand{\bfi}{\mathbf{i}}

\newcommand{\bfrho}{\boldsymbol{\rho}}

\begin{document}

\title[Schatten class positive Toeplitz operators]{Schatten class positive Toeplitz operators on Bergman spaces of the Siegel upper half-space}

\thanks{This work was supported by the National Natural Science Foundation of China grant 11971453.}

\author{Jiajia Si}
\email{sijiajia@mail.ustc.edu.cn}

\address{School of Science, 
Hainan University, 
Haikou, Hainan 570228, 
People’s Republic of China.}

\subjclass[2010]{Primary 47B35; Secondary 32A36}

\begin{abstract}
We characterize Schatten class membership of positive Toeplitz operators defined on the Bergman spaces over the Siegel upper half-space in terms of averaging functions and Berezin transforms in the range of $0<p<\infty$.
\end{abstract}

\keywords{Toeplitz operators; Schatten classes; Bergman spaces; Siegel upper half-space; Berezin transform.}

\maketitle

\section{Introduction}

Let $\mathbb{C}^n$ be the $n$-dimensional complex Euclidean space.
For any two points $z=(z_1,\cdots,z_n)$ and $w=(w_1,\cdots,w_n)$ in $\mathbb{C}^n$ we write
\[
z\cdot\overline{w} := z_1\overline{w}_1+\cdots+z_n\overline{w}_n,
\]
and
\[
|z|:= \sqrt{z\cdot\overline{z}} = \sqrt{|z_1|^2+\cdots+|z_n|^2}.
\]
The set
\[
\calU=\left\{ z\in\mathbb{C}^n:\ImPt z_n>|z^{\prime}|^2\right\}
\]
is the Siegel upper half-space. Here and throughout the paper, we use the notation
\[
z=(z^{\prime},z_n),\,\,\,\, \text{where}\, z^{\prime}=(z_1,\cdots,z_{n-1})\in\mathbb{C}^{n-1}\,\, \text{and}\,\, z_n\in\mathbb{C}^{1}.
\]
%The Siegel upper half-space is biholomorphically equivalent to the unit ball $\ball$ of $\mathbb{C}^n$, via the Cayley transform $\Phi:\ball\to\calU$ given by
%\[
%(z^{\prime},z_n) \longmapsto \left( \frac{z^{\prime}}{1+z_n},i\frac{1-z_n}{1+z_n}\right),
%\]
%and so it is also referred to as the unbounded realization of the unit ball in $\mathbb{C}^n$.
The Bergman space $A^2(\calU)$ is the space of all complex-valued holomorphic functions $f$ on $\calU$ such that 
\[
\int_{\calU} |f|^2 dV<\infty,
\]
where $V$ denotes the Lebesgue measure on $\calU$. It is a closed subspace of $L^2(\calU)$ and hence a Hilbert space.
The orthogonal projection from $L^2(\calU)$ onto $A^2(\calU)$ can be expressed as an integral operator:
\[
Pf(z)=\int_{\calU} K(z,w)f(w)dV(w),
\]
with the Bergman kernel
\[
K(z,w)=\frac{n!}{4\pi^n}\left[ \frac{i}{2}(\overline{w}_n-z_n)-z^{\prime} \cdot \overline{w^{\prime}} \right]^{-n-1}.
\]
See, for instance, \cite[Theorem 5.1]{Gin64}.
%Hereafter, we shall use the notation
%\[
%\bfrho(z,w):=\frac{i}{2}(\overline{w}_n-z_n)-z^{\prime} \cdot \overline{w^{\prime}},
%\]
%and $\bfrho(z):=\bfrho(z,z)$.
The operator $P$ is usually called a Bergman projection.
It is a bounded projection from $L^2(\calU)$ onto $A^2(\calU)$, see \cite[Lemma 2.8]{CR80}.
%Suppose $\varphi$ is a function on $\calU$, the Toeplitz operator $T_{\varphi}$ acting on $A_{\alpha}^p(\calU)$ with symbol $\varphi$ on is defined by $T_{\varphi}f=P(\varphi f)$. Using the integral representation of the Bergman projection $P$, we can write $T_{\varphi}$ as an integral operator as follows:
%\[
%T_{\varphi}f(z)=\int_{\calU} K(z,w)\varphi(w)f(w) dV(w).
%\]
%So $T_{\varphi}f=f$ if $\varphi\in H^{\infty}(\calU)$. More generally, Toeplitz operators can also be defined for complex measures on $\calU$.

%Let $t$ be a positive parameter, and $\calS_t$ denote the vector space of holomorphic functions $f$ in $\calU$ satisfying
%\[
%\sup_{z\in\calU} |\bfrho(z,\bfi)^t f(z)|<\infty.
%\]
%We will show that if $\lambda>0$, $\calS_{n+1+\lambda}$ is a dense subspace of $A^p(\calU)$ for all $1\leq p<\infty$ (see Theorem \ref{thm:dense,subspace}).
%Given $\varphi \in L^{\infty}(\calU)$, we define an operator on $A^p(\calU)$ by
%\[
%T_{\varphi} f := P(\varphi f), \quad f\in A^p(\calU).
%\]
%$T_{\varphi}$ is called the Toeplitz operator on $A^p(\calU)$ with symbol $\varphi$.
%Toeplitz operators can also be defined for unbounded symbols or even positive Borel measures on $\calU$.
Let $\mathcal{M}_+$ be the set of all positive Borel measures $\mu$ such that
\[
\int_{\calU}\frac{d\mu(z)}{|z_n+i|^{\alpha}}<\infty
\]
for some $\alpha>0$.
Given $\mu\in \mathcal{M}_+$, the Toeplitz operator $T_{\mu}$ with symbol $\mu$ is given by
\[
T_{\mu}f(z)=\int_{\calU} K(z,w)f(w) d\mu(w)
\]
for $f\in A^2(\calU)$. In case $d\mu=gdV$, we write $T_{\mu}=T_g$. In general, $T_{\mu}$ may not even be defined on all of $A^2(\calU)$, but it is always densely defined by the fact that, for each $\alpha>n+1/2$, holomorphic functions $f$ on $\calU$
such that $f(z) = O(|z_n+i|^{-\alpha})$ form a dense subset of $A^2(\calU)$, see \cite{LS20}.

For a positive Borel measure $\mu$ on $\calU$, the Berezin transform of $\mu$ is given by
\begin{equation}\label{eqn:berezin}
\widetilde{\mu}(z) := \int_{\calU} |k_z(w)|^2 d\mu(w),\quad z\in \calU,
\end{equation}
where
\[
k_z(w) := K(z,w)/\sqrt{K(z,z)},\quad w\in\calU.
\]
%They are called normalized reproducing kernels of $A^2(\calU)$.
For $z\in\calU$ and $r>0$, we define the averaging function
\begin{equation}\label{eqn:averaging}
\widehat{\mu}_r(z) := \frac{\mu(D(z,r))}{|D(z,r)|},
\end{equation}
where $D(z,r)$ is the  Bergman metric ball at $z$ with radius $r$ and
$|D(z,r)|$ denotes the Lebesgue measure of $D(z,r)$.

Quite recently in \cite{LS20}, we have characterized the boundedness and compactness of $T_{\mu}$ on Bergman spaces in terms of $\widetilde{\mu}$ and $\widehat{\mu}_r$. In the paper, we are going to study the membership of $T_{\mu}$ in Schatten class $S_p$ in terms of $\widetilde{\mu}$ and $\widehat{\mu}_r$. It is a natural subsequent work of \cite{LS20}. Our main result is the following.

\begin{theorem}\label{thm:main1}
Suppose $0<p<\infty$, $r>0$ and $\mu\in\mathcal{M}_+$. Then the following conditions are equivalent:
\begin{enumerate}
  \item[(i)] $T_{\mu}\in S_p$.
  \item[(ii)] $\widehat{\mu}_r \in L^p(\calU,d\lambda)$.
  \item[(iii)] $\{\widehat{\mu}_r(a_k)\}\in l^p$ for every $r$-lattice $\{a_k\}$.
  \item[(iv)] $\{\widehat{\mu}_r(a_k)\}\in l^p$ for some $r$-lattice $\{a_k\}$.  
\end{enumerate}
Moreover, if $p>n/(n+1)$, then the above conditions are also equivalent to  
\begin{enumerate}
   \item[(v)] $\widetilde{\mu}\in L^p(\calU,d\lambda)$.
\end{enumerate}
\end{theorem}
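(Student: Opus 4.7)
The plan is to follow the Luecking--Zhu paradigm for Schatten-class Toeplitz operators on weighted Bergman spaces, adapting the geometric and operator-theoretic steps from the unit-ball setting to the Siegel half-space, using the Bergman metric and $r$-lattice tools developed in \cite{LS20}.

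For the measure-theoretic equivalences $(ii)\Leftrightarrow(iii)\Leftrightarrow(iv)$, I would exploit that each ball $D(z,r)$ has $d\lambda$-measure depending only on $r$ and that $\widehat{\mu}_r(z)\asymp\widehat{\mu}_r(w)$ whenever $z,w$ are within fixed Bergman distance; combined with the covering/packing structure of an $r$-lattice, this discretizes $\int_\calU \widehat{\mu}_r^p\,d\lambda$ as $\sum_k\widehat{\mu}_r(a_k)^p$ up to constants. For $(ii)\Leftrightarrow(v)$ under $p>n/(n+1)$, the direction $(v)\Rightarrow(ii)$ is the pointwise bound $\widehat{\mu}_r\lesssim\widetilde{\mu}$ coming from $|k_z(w)|^2\gtrsim 1/|D(z,r)|$ on $D(z,r)$; the converse uses the discretization $\widetilde{\mu}(z)\lesssim\sum_k\widehat{\mu}_r(a_k)\,|D(a_k,r)|\,|k_z(a_k)|^2$ together with a Forelli--Rudin type $L^p(d\lambda)$-boundedness estimate, whose integrability threshold is precisely $n/(n+1)$ and explains the restriction in condition (v).

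The heart of the theorem is $(i)\Leftrightarrow(ii)$, which I would split by the value of $p$. For $p\geq 1$ I would, on the one hand, decompose $T_\mu=\sum_k T_{\mu_k}$ where $\mu_k$ is the restriction of $\mu$ to a disjointification of $\{D(a_k,r)\}$; each $T_{\mu_k}$ behaves like a positive rank-one operator with $\|T_{\mu_k}\|_{S_p}\lesssim\widehat{\mu}_r(a_k)$, and the triangle inequality in $S_p$ converts $l^p$-summability into a Schatten norm bound. In the reverse direction, positivity of $T_\mu$ and the identity $\widetilde{\mu}(a_k)=\langle T_\mu k_{a_k},k_{a_k}\rangle$ combine with a Schur-type inequality, using the almost-orthogonality of normalized kernels at lattice points, to control $\sum_k\widetilde{\mu}(a_k)^p$ by $\|T_\mu\|_{S_p}^p$; coupled with $\widetilde{\mu}\gtrsim\widehat{\mu}_r$, this closes the equivalence.

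The main obstacle is the quasi-Banach range $0<p<1$, where the ordinary triangle inequality fails and the trace-class inequalities reverse. Here I would replace the triangle inequality by the $p$-inequality $\|A+B\|_{S_p}^p\leq\|A\|_{S_p}^p+\|B\|_{S_p}^p$ on the same atomic decomposition to obtain $(ii)\Rightarrow(i)$, and for $(i)\Rightarrow(ii)$ invoke a Bergman-space atomic decomposition combined with a sampling argument: if $T_\mu\in S_p$, then the Gram-type matrix $[\langle T_\mu k_{a_j},k_{a_k}\rangle]$ lies in a discrete Schatten class via an almost-diagonalization estimate, and its diagonal $\{\widetilde{\mu}(a_k)\}$ then lies in $l^p$. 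The crucial technical input throughout is the off-diagonal decay of $|\langle k_{a_j},k_{a_k}\rangle|$ in the Bergman metric on $\calU$, without which neither the frame-type bounds nor the Schur estimates survive at the required thresholds.
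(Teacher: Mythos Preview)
Your measure-theoretic parts $(ii)\Leftrightarrow(iii)\Leftrightarrow(iv)$ and the Berezin equivalence $(v)$ match the paper. The genuine gap is in $(iv)\Rightarrow(i)$, where the ``rank-one pieces plus triangle inequality'' scheme does not close in either range. For $p>1$ the triangle inequality in $S_p$ only yields $\|T_\mu\|_{S_p}\le\sum_k\|T_{\mu_k}\|_{S_p}$, an $\ell^1$-type bound on the pieces, so $\{\widehat\mu_r(a_k)\}\in\ell^p$ is not enough to sum; the paper bypasses this by proving that $f\mapsto T_f$ maps $L^1(\calU,d\lambda)$ into $S_1$ (a trace estimate) and $L^\infty$ into bounded operators, interpolating to get $T_{\widehat\mu_r}\in S_p$ whenever $\widehat\mu_r\in L^p(\calU,d\lambda)$, and then invoking the operator inequality $T_\mu\le C_r\,T_{\widehat\mu_r}$ (Lemma~\ref{lem:TuleqTaver}). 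For $0<p<1$ your $p$-inequality route would require $\|T_{\mu_k}\|_{S_p}\lesssim\widehat\mu_r(a_k)$, but $T_{\mu_k}$ is not rank one, and since $\|\cdot\|_{S_p}\ge\|\cdot\|_{S_1}$ for $p<1$ the only easy estimate (the trace) points the wrong way; here the paper fixes a Coifman--Rochberg atomic decomposition of $A^2(\calU)$, builds a bounded surjection $A$ sending an orthonormal basis $\{e_k\}$ to atoms $h_k$ at the lattice points, and verifies $\sum_k\langle A^*T_\mu A\,e_k,e_k\rangle^p<\infty$, which for positive compact operators forces $A^*T_\mu A\in S_p$ and hence $T_\mu\in S_p$.

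Your sketch of $(i)\Rightarrow(ii)$ also diverges from the paper. When $p\ge1$ no Schur or almost-orthogonality argument is used: the paper simply combines the operator-Jensen inequality $\widetilde{T_\mu^p}\ge(\widetilde\mu)^p$ with the trace identity $\mathrm{tr}(T)=\int_{\calU}\widetilde T\,d\lambda$. When $0<p<1$ your discrete Gram-matrix idea is in the right spirit, but the paper's mechanism again runs through the atomic surjection $A$: one passes to a well-separated sublattice $\{\zeta_j\}$, restricts $\mu$ accordingly to $\nu$, splits $A^*T_\nu A=D+E$ into diagonal and off-diagonal parts, and shows $\|E\|_{S_p}^p\le C_R\sum_j\widehat\mu_r(\zeta_j)^p$ with $C_R\to0$ as the separation $R\to\infty$, so that $\sum_j\widehat\mu_r(\zeta_j)^p\lesssim\|D\|_{S_p}^p\lesssim\|T_\mu\|_{S_p}^p$. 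The kernel off-diagonal decay you emphasize is exactly what drives $C_R\to0$, but it enters only after this reduction, not directly at the level of the normalized reproducing kernels $k_{a_j}$.
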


Here $d\lambda(z)=K(z,z)dV(z)$ is the M\"obius invariant measure on $\calU$ (see \cite[Proposition 1.4.12]{Kra01} for example) and the cut-off point $n/(n+1)$ is sharp. 

The equivalences of (i), (iii) and (iv) were originally proved by Luecking \cite{Lue87} for the full range $0<p<\infty$ in the case of the unit disk. Later Zhu \cite{Zhu88} extended Luecking's result to bounded symmetric domains and added condition (v), with the restricted range $1\leq p<\infty$. After that, Zhu \cite{Zhu07NY} continued to study similar characterizations for the range $0<p<1$ on the unit ball. His result reveals an interesting aspect that, the full range $0<p<1$ for characterization with averaging function is appropriate, while this is not the case for characterization with Berezin transform. 
On the setting of harmonic Bergman space of the upper half-space of $\mathbb{R}^n$, Choe et al. obtained similar characterizations, see \cite{CKY02,CKL07}. 

%To be precise, the range of $p$ turns out to have a sharp lower bound, depending on the dimensions of underlying spaces. Since when $p$ less than the lower bound, the Berezin transform

The paper is organized as follows. In Section 2 we collect some basic auxiliary results. Section 3 is devoted to show the equivalences of (ii)-(iv). In Section 4 we prove Theorem \ref{thm:main1} in the case of $p\geq 1$, where also we recall the notion and some elementary facts of Schatten class operators. Section 5 is devoted to show the equivalences of (i)-(iv) in the range of $0<p<1$. The characterization of Schatten class Toeplitz operators in terms of the Berezin transform is obtained in Section 6. 

Throughout the paper, the letter C will denote a positive constant that may vary at each occurrence but is independent of the essential variables. 

\section{Preliminaries}

In this section we introduce notations and collect several basic lemmas which will be used in later sections.
Throughout the paper, we write
\[
\bfrho(z,w):=\frac{i}{2}(\overline{w}_n-z_n)-z^{\prime} \cdot \overline{w^{\prime}}
\]
and let $\bfrho(z):=\bfrho(z,z)=\ImPt z_n-|z^{\prime}|^2$ for simplicity.
With this notation, the Bergman kernel of $\calU$ is 
\[
K(z,w) = \frac {n!}{4\pi^n} \frac {1}{\bfrho(z,w)^{n+1}}, \quad z,w\in \calU.
\]

For each $t>0$, we define the nonisotropic dilation $\delta_t$ by
\[
\delta_t(u)=(t u^{\prime},t^2 u_n), \quad u\in \calU.
\]
Also, to each fixed $z\in\calU$, we associate the following (holomorphic) affine
self-mapping of $\calU$:
\[
h_z(u) ~:=~ \left(u^{\prime} - z^{\prime}, u_n - \RePt z_n - 2i u^{\prime} \cdot \overline{z^{\prime}}  + i|z^{\prime}|^2 \right),
\quad u\in \calU.
\]
All these mappings are holomorphic automorphisms of $\calU$. See \cite[Chapter XII]{Ste93}. Hence the mappings $\sigma_z := \delta_{\bfrho(z)^{-1/2}} \circ h_z$
are holomorphic automorphisms of $\calU$. Simple calculations show that $\sigma_z(z)=\bfi:=(0^{\prime},i)$.

\begin{lemma}\label{lem:rho(sigma_a)}
Suppose $z,u,v\in\calU$, we have 
\begin{align}
\bfrho(\sigma_z(u),\sigma_z(v))&= \bfrho(z)^{-1} \bfrho(u,v)\nonumber,\\
\bfrho(\sigma_z^{-1}(u),\sigma_z^{-1}(v))&= \bfrho(z) \bfrho(u,v)\label{eqn:-rho(sigma_a)}.
\end{align}
\end{lemma}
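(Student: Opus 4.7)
The plan is to decompose $\sigma_z = \delta_{\bfrho(z)^{-1/2}} \circ h_z$ and analyze how each factor affects $\bfrho$. Once we know the behavior of $h_z$ and $\delta_t$ separately, the first identity follows by composition, and the second identity is an immediate consequence of the first via a change of variables.

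\textbf{Step 1: $h_z$ preserves $\bfrho$.} I would first prove that $\bfrho(h_z(u),h_z(v))=\bfrho(u,v)$ for all $u,v\in\calU$. This is a direct expansion. Writing $h_z(u)'=u'-z'$ and $h_z(v)'=v'-z'$, we get
\[
h_z(u)'\cdot\overline{h_z(v)'} \;=\; u'\cdot\overline{v'} - u'\cdot\overline{z'} - z'\cdot\overline{v'} + |z'|^2.
\]
Similarly, taking the complex conjugate of the last coordinate of $h_z(v)$ and subtracting the last coordinate of $h_z(u)$, the real parts $\RePt z_n$ cancel and one is left with
\[
\overline{h_z(v)_n} - h_z(u)_n \;=\; \overline{v_n}-u_n + 2i\,\overline{v'}\cdot z' + 2i\,u'\cdot\overline{z'} - 2i|z'|^2.
\]
Multiplying by $i/2$ produces exactly the terms $-z'\cdot\overline{v'}-u'\cdot\overline{z'}+|z'|^2$, which cancel those arising from $h_z(u)'\cdot\overline{h_z(v)'}$, leaving $\bfrho(u,v)$.

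\textbf{Step 2: behavior of $\delta_t$.} Because $\delta_t$ scales $u'$ by $t$ and $u_n$ by $t^2$, each of the two summands in $\bfrho(\delta_t(u),\delta_t(v))$ scales by $t^2$, so
\[
\bfrho(\delta_t(u),\delta_t(v)) \;=\; t^2\,\bfrho(u,v).
\]
Choosing $t=\bfrho(z)^{-1/2}$ and combining with Step 1 gives
\[
\bfrho(\sigma_z(u),\sigma_z(v)) \;=\; \bfrho(z)^{-1}\bfrho(h_z(u),h_z(v)) \;=\; \bfrho(z)^{-1}\bfrho(u,v),
\]
which is the first identity.

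\textbf{Step 3: the inverse identity.} For the second identity, substitute $u\mapsto\sigma_z^{-1}(u)$ and $v\mapsto\sigma_z^{-1}(v)$ in the first identity and multiply through by $\bfrho(z)$; this yields $\bfrho(\sigma_z^{-1}(u),\sigma_z^{-1}(v))=\bfrho(z)\bfrho(u,v)$, as claimed. The only non-routine part of the argument is the algebraic cancellation in Step 1, and even that reduces to bookkeeping once one keeps track of the fact that $\overline{v'\cdot\overline{z'}}=z'\cdot\overline{v'}$ under the paper's notation; everything else is a one-line computation.
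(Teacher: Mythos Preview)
Your proof is correct and follows essentially the same decomposition as the paper: both split $\sigma_z=\delta_{\bfrho(z)^{-1/2}}\circ h_z$, verify $\bfrho(h_z(u),h_z(v))=\bfrho(u,v)$ and $\bfrho(\delta_t(u),\delta_t(v))=t^2\bfrho(u,v)$, and compose. The only difference is in the second identity: the paper redoes the computation using $\sigma_z^{-1}=h_z^{-1}\circ\delta_{\bfrho(z)^{1/2}}$ together with $\bfrho(h_z^{-1}(u),h_z^{-1}(v))=\bfrho(u,v)$, whereas you obtain it by direct substitution into the first identity, which is a cleaner shortcut.
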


\begin{proof}
First note that
\[
\bfrho(\delta_t(u),\delta(v))=t^2 \bfrho(u,v).
\]
Also, an easy calculation shows that
\[
\bfrho(h_z(u),h_z(v))=\bfrho(u,v).
\]
Then a combination of the two above equalities gives
\begin{align*}
\bfrho(\sigma_a(z),\sigma_a(w)) &=\bfrho(\delta_{\bfrho(a)^{-1/2}}(h_a(z)),\delta_{\bfrho(a)^{-1/2}}(h_a(w)))\\
&=\bfrho(a)^{-1} \bfrho( h_a(z),h_a(w))\\
&= \bfrho(a)^{-1} \bfrho(z,w),
\end{align*}
which is the first equality. 
The proof of the second one is exactly the same by observing that 
\[
\bfrho(h_z^{-1}(u),h_z^{-1}(v))=\bfrho(u,v)
\]
and 
\[
\sigma_z^{-1}=h_z^{-1}\circ \delta_{\bfrho(z)^{1/2}}.
\]
So we are done.
\end{proof}

We recall from \cite{LS20} that the Bergman metric on $\calU$ is given by 
\[
 \beta(z,w)=\tanh^{-1}\sqrt{1-\frac{\bfrho(z)\bfrho(w)}{|\bfrho(z,w)|^2}},
\]
and the Bergman metric ball at $z$ with radius $r>0$ is denoted by
 \[
 D(z,r) = \{w\in\calU: \beta(z,w)<r\}.
 \]
A sequence $\{a_k\}$ in $\calU$ is called an $r$-lattice in the Bergman metric, or an $r$-lattice for short, 
if it satisfies the following conditions:
\begin{enumerate}
  \item[(i)] $\calU=\bigcup_{k=1}^{\infty} D(a_k,r)$;
  \item[(ii)] $\beta(a_i,a_j)\geq r/2$ for all $i \neq j$.
\end{enumerate}
For any $r>0$, the existence of an $r$-lattice can be verified by the proof of \cite[Theorem 2.23]{Zhu05}. 
Also, associated with a standard maximality argument, we have the following lemma.
\begin{lemma}\label{lem:decomposition}
If $\{a_k\}$ is an $r$-lattice, then it has the following properties:
\begin{enumerate}
  \item[(i)] For any $R>0$, there exists a positive integer $N$ (depending on $r$ and $R$) such that each point in $\calU$ belongs to at most $N$ of the sets $\{D(a_k,R)\}$.
  \item[(ii)] For any $R>0$, there is a finite partition 
$\{a_k\}=\Gamma_1\cup\cdots\cup\Gamma_m$ such that for every $i \in\{1,\cdots,m\}$, the conditions $u,v\in\Gamma_i$ and $u\neq v$ imply that $\beta(u,v)>R$.
\end{enumerate}
\end{lemma}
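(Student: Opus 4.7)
The plan is to obtain both parts from the same two facts: the separation property $\beta(a_i,a_j)\geq r/2$ guarantees that the smaller balls $D(a_k,r/4)$ are pairwise disjoint, while the invariance of both $\beta$ and the measure $d\lambda=K(z,z)dV(z)$ under each automorphism $\sigma_z$ makes $\lambda(D(z,s))$ a function of $s$ alone. Indeed, since $\sigma_z(z)=\bfi$ and $\sigma_z$ is a biholomorphism, $\sigma_z\bigl(D(z,s)\bigr)=D(\bfi,s)$, and invariance of $d\lambda$ yields $\lambda(D(z,s))=\lambda(D(\bfi,s))$ for every $z\in\calU$.

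For part (i), I would fix $z\in\calU$ and set $I(z):=\{k:z\in D(a_k,R)\}$. By symmetry of $\beta$, each $a_k$ with $k\in I(z)$ lies in $D(z,R)$, and the triangle inequality then places $D(a_k,r/4)\subset D(z,R+r/4)$ for every such $k$. Disjointness of $\{D(a_k,r/4)\}_{k\in I(z)}$ combined with the constant value $c_r:=\lambda(D(\bfi,r/4))$ of each summand gives
\[
|I(z)|\,c_r\;=\;\sum_{k\in I(z)}\lambda\bigl(D(a_k,r/4)\bigr)\;\leq\;\lambda\bigl(D(z,R+r/4)\bigr)\;=\;\lambda\bigl(D(\bfi,R+r/4)\bigr),
\]
so $N:=\lceil \lambda(D(\bfi,R+r/4))/c_r\rceil$ works.

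For part (ii), I would recast the statement as a graph-coloring problem. Form the graph $G$ on the vertex set $\{a_k\}$ by joining $a_i$ and $a_j$ exactly when $0<\beta(a_i,a_j)\leq R$. Applying part (i) with $z=a_i$ shows that the number of neighbors of any vertex is at most $N-1$, so $G$ has maximum degree less than $N$. A greedy coloring then produces a partition $\{a_k\}=\Gamma_1\cup\cdots\cup\Gamma_m$ with $m\leq N$ such that no edge of $G$ lies inside any $\Gamma_i$; this is precisely the required separation $\beta(u,v)>R$ for distinct $u,v\in\Gamma_i$.

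The main obstacle is not really an obstacle but a verification: one must confirm that $\lambda(D(z,s))$ is genuinely independent of $z$, which rests on the Möbius invariance already implicit in the earlier material of the paper together with Lemma~\ref{lem:rho(sigma_a)}. Once this is in hand, the proof is a short volume-packing count followed by a one-line graph-coloring argument.
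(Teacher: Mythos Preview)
Your argument is correct and is precisely the sort of ``standard maximality argument'' the paper alludes to; the paper itself does not supply a proof of this lemma, so there is nothing substantive to compare. One cosmetic point: in part~(ii) you join $a_i$ and $a_j$ when $0<\beta(a_i,a_j)\le R$, but part~(i) as stated controls membership in the \emph{open} balls $D(a_k,R)=\{w:\beta(a_k,w)<R\}$, so the degree bound you quote does not literally cover neighbors at distance exactly~$R$. This is harmless---just invoke part~(i) with any radius $R'>R$ (e.g.\ $R'=R+1$) to bound the degree, and the greedy coloring goes through unchanged.
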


We need the following result \cite[Lemma 5]{LLHZ18}, which will be used in a few times. 
\begin{lemma}
Let $s,t\in \mathbb{R}$. Then we have
\begin{equation}\label{eqn:keylem}
\int\limits_{\calU} \frac {\bfrho(w)^{t}} {|\bfrho(z,w)|^{s}} dV(w) ~=~
\begin{cases}
\dfrac {C(n,s,t)} {\bfrho(z)^{s-t-n-1}}, &
\text{ if } t>-1 \text{ and } s-t>n+1\\[12pt]
+\infty, &  otherwise
\end{cases}
\end{equation}
for all $z\in \calU$, where
\[
C(n,s,t):=\frac {4 \pi^{n} \Gamma(1+t) \Gamma(s-t-n-1)} {\Gamma^2\left(s/2\right)}.
\]
\end{lemma}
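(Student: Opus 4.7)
The plan is to use the automorphism $\sigma_z$ to translate the base point $z$ to $\bfi$, and then evaluate the resulting one-point integral by Fubini.

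First I would change variables via $w = \sigma_z^{-1}(u)$. From the factorization $\sigma_z = \delta_{\bfrho(z)^{-1/2}} \circ h_z$, the map $h_z$ is triangular affine with complex Jacobian determinant equal to $1$, while $\delta_t$ has complex Jacobian $t^{n+1}$, so the real Jacobian of $\sigma_z^{-1}$ is $\bfrho(z)^{n+1}$. Combining $dV(w) = \bfrho(z)^{n+1}\,dV(u)$ with $\bfrho(w) = \bfrho(z)\bfrho(u)$ and $\bfrho(z,w) = \bfrho(z)\bfrho(\bfi, u)$ (both from Lemma~\ref{lem:rho(sigma_a)}, using $\sigma_z(z) = \bfi$) yields
\[
\int_{\calU}\frac{\bfrho(w)^t}{|\bfrho(z,w)|^s}\,dV(w) \;=\; \bfrho(z)^{-(s-t-n-1)}\int_{\calU}\frac{\bfrho(u)^t}{|\bfrho(\bfi,u)|^s}\,dV(u),
\]
so the dependence on $z$ already has exactly the stated form $\bfrho(z)^{-(s-t-n-1)}$.

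Next I would evaluate the remaining constant $I$. Since $\bfrho(\bfi,u) = (1 + i\overline{u}_n)/2$, a short calculation gives $|\bfrho(\bfi,u)| = |u_n + i|/2$. Writing $u_n = x + iy$, I would apply Fubini and integrate over $u' \in \mathbb{C}^{n-1}$ first. Polar coordinates on the ball $\{|u'|^2 < y\}$ followed by the substitution $r^2 = \tau$ produce
\[
\int_{|u'|^2<y}(y-|u'|^2)^t\,dV(u') \;=\; \frac{\pi^{n-1}\,\Gamma(t+1)}{\Gamma(t+n)}\,y^{t+n-1},
\]
which is finite precisely when $t > -1$. The outer integral over $u_n$ then splits into the horizontal integral $\int_{\mathbb{R}} (x^2 + (y+1)^2)^{-s/2}\,dx = (y+1)^{1-s}\sqrt{\pi}\,\Gamma((s-1)/2)/\Gamma(s/2)$ (valid for $s>1$), followed by a Beta integral $\int_0^\infty y^{t+n-1}(y+1)^{1-s}\,dy = \Gamma(t+n)\Gamma(s-t-n-1)/\Gamma(s-1)$, which converges exactly when $s - t > n+1$. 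In the complementary ranges one of the integrands fails to be integrable, making the full integral $+\infty$ and giving the dichotomy stated in the lemma.

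Finally I would apply Legendre's duplication formula $\Gamma(s-1) = 2^{s-2}\pi^{-1/2}\Gamma((s-1)/2)\Gamma(s/2)$ to cancel the factor of $\Gamma((s-1)/2)$ against $\Gamma(s-1)$ and collapse the powers of $2$, arriving at $I = 4\pi^n\,\Gamma(1+t)\Gamma(s-t-n-1)/\Gamma^2(s/2) = C(n,s,t)$. The main obstacle here is purely bookkeeping of the Gamma constants; once the Jacobian of $\sigma_z$ is correctly identified, the analytic content reduces to a routine Fubini computation.
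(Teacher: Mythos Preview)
Your argument is correct. The reduction to the base point $\bfi$ via $\sigma_z^{-1}$ is clean: the Jacobian computation is right, and the identities from Lemma~\ref{lem:rho(sigma_a)} give exactly the claimed power of $\bfrho(z)$. The subsequent Fubini evaluation is also correct; I checked the constants and the duplication-formula step, and they collapse to $C(n,s,t)$ as you state. The divergence statement is justified because the integrand is nonnegative, so Tonelli lets you read off $+\infty$ as soon as any inner integral diverges. One small remark: the auxiliary condition $s>1$ that appears in your $x$-integral is not an extra hypothesis, since $t>-1$ and $s-t>n+1$ force $s>n\geq 1$; and in the divergence regime, if $s\leq 1$ then automatically $s-t\leq n+1$, so the dichotomy is complete.

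There is nothing in the paper to compare against: the paper does not prove this lemma but simply quotes it from \cite{LLHZ18}. Your direct computation is essentially the standard proof and would serve as a self-contained replacement for that citation.
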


The following results can be found in \cite{LS20}, they serve as basic tools in this paper.

\begin{lemma}
We have
\begin{equation}\label{eqn:elemtryeq1}
 2|\bfrho(z,w)|\geq \max\{\bfrho(z),\bfrho(w)\}
\end{equation}
for any $z, w\in \calU$.
\end{lemma}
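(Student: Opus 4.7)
The plan is to bound $|\bfrho(z,w)|$ from below by its real part and then control that real part by $\bfrho(z)$ and $\bfrho(w)$ using a standard Cauchy--Schwarz/AM--GM estimate on the ``off-diagonal'' term $z'\cdot\overline{w'}$. In fact I expect to prove the slightly stronger inequality $2|\bfrho(z,w)|\ge \bfrho(z)+\bfrho(w)$, from which the stated bound follows since $\bfrho(z),\bfrho(w)>0$ on $\calU$.

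First I would expand $\bfrho(z,w)=\tfrac{i}{2}(\overline{w}_n-z_n)-z'\cdot\overline{w'}$ and take real parts. Writing $z_n=x+iy$, $w_n=u+iv$, a short calculation gives
\[
\RePt\,\tfrac{i}{2}(\overline{w}_n-z_n) \;=\; \tfrac{1}{2}\bigl(\ImPt z_n+\ImPt w_n\bigr),
\]
so that
\[
\RePt\,\bfrho(z,w) \;=\; \tfrac{1}{2}\bigl(\ImPt z_n+\ImPt w_n\bigr) \;-\; \RePt\bigl(z'\cdot\overline{w'}\bigr).
\]

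Next I would estimate the cross term. By the Cauchy--Schwarz inequality and the elementary AM--GM bound $2ab\le a^2+b^2$,
\[
\bigl|\RePt(z'\cdot\overline{w'})\bigr| \;\le\; |z'|\,|w'| \;\le\; \tfrac{1}{2}\bigl(|z'|^2+|w'|^2\bigr).
\]
Substituting this into the previous identity and recalling $\bfrho(z)=\ImPt z_n-|z'|^2$ yields
\[
\RePt\,\bfrho(z,w) \;\ge\; \tfrac{1}{2}\bigl(\bfrho(z)+\bfrho(w)\bigr).
\]

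Finally, using $|\bfrho(z,w)|\ge \RePt\,\bfrho(z,w)$ together with the positivity of $\bfrho(z)$ and $\bfrho(w)$ on $\calU$, I would conclude
\[
2|\bfrho(z,w)| \;\ge\; \bfrho(z)+\bfrho(w) \;\ge\; \max\{\bfrho(z),\bfrho(w)\},
\]
which is the desired inequality. There is no real obstacle here: the whole argument reduces to separating real and imaginary parts and applying AM--GM to $|z'||w'|$; the only thing worth double-checking is the sign bookkeeping in the computation of $\RePt\,\tfrac{i}{2}(\overline{w}_n-z_n)$.
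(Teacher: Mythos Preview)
Your argument is correct: the real-part computation, the Cauchy--Schwarz/AM--GM bound on $z'\cdot\overline{w'}$, and the final chain of inequalities all check out, and you even obtain the stronger estimate $2|\bfrho(z,w)|\ge\bfrho(z)+\bfrho(w)$. The paper does not give its own proof of this lemma but simply cites it from \cite{LS20}, so there is no in-paper argument to compare against; your self-contained elementary proof is a perfectly good substitute.
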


\begin{lemma}\label{lem:volBergmanball}
For any $z\in\mathcal{U}$ and $r>0$ we have
\begin{equation}\label{eq:volBergmanball}
|D(z,r)| ~=~ \frac {4\pi^n}{n!} \frac {\tanh^{2n} r } {(1-\tanh^2 r)^{n+1}}\, \bfrho(z)^{n+1}.
\end{equation}
Consequently, the averaging function
\begin{equation}\label{eq:averaging}
\widehat{\mu}_r (z) ~=~ \frac {n!}{4\pi^n} \frac {(1-\tanh^2 r)^{n+1}} {\tanh^{2n} r}
\frac {\mu(D(z,r))} {\bfrho(z)^{n+1}}.
\end{equation}
\end{lemma}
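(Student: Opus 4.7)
The plan is to reduce the computation of $|D(z,r)|$ to that of $|D(\bfi,r)|$ (with $\bfi = (0',i)$) by exploiting the automorphism $\sigma_z$, and then to evaluate $|D(\bfi,r)|$ by a direct integration over the explicit defining inequality of the Bergman ball.

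For the reduction step, I first observe that $\sigma_z$ is an isometry of the Bergman metric $\beta$: setting $u = v$ in Lemma \ref{lem:rho(sigma_a)} gives $\bfrho(\sigma_z(u)) = \bfrho(z)^{-1}\bfrho(u)$, so the ratio $\bfrho(u)\bfrho(v)/|\bfrho(u,v)|^{2}$ is preserved, and therefore $\beta(\sigma_z u, \sigma_z v) = \beta(u,v)$. Since $\sigma_z(z) = \bfi$, the map $\sigma_z$ carries $D(z,r)$ bijectively onto $D(\bfi,r)$. Using $\sigma_z = \delta_{\bfrho(z)^{-1/2}} \circ h_z$, where $h_z$ is affine with unit complex Jacobian and $\delta_t$ scales $n-1$ complex directions by $t$ and the last one by $t^2$, the complex Jacobian of $\sigma_z$ is the constant of modulus squared equal to $\bfrho(z)^{-(n+1)}$. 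The real change-of-variables formula then yields
\[
|D(z,r)| \;=\; \bfrho(z)^{n+1}\,|D(\bfi,r)|.
\]

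For the second step, I write $w = (w', x+iy)$ and set $s := 1-\tanh^{2}r$. A short calculation gives $|\bfrho(\bfi,w)|^{2} = \tfrac{1}{4}[(1+y)^{2}+x^{2}]$, so $w \in D(\bfi,r)$ is equivalent to
\[
y - |w'|^{2} \;>\; \frac{s}{4}\bigl[(1+y)^{2} + x^{2}\bigr].
\]
Fixing $R := |w'|^{2}$ and completing the square in $(y,x)$ shows that the admissible pairs form an open disk of area $\frac{4\pi}{s^{2}}(1-s-sR)$ (nonempty only when $R < (1-s)/s$), and a short estimate verifies that this disk is automatically contained in the half-space $\{y > R\}$, so membership in $\calU$ imposes no extra constraint. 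Integrating this disk-area against the radial measure on $\bbC^{n-1}$ and substituting $v = sR$ reduces the problem to an elementary beta-type integral, yielding
\[
|D(\bfi,r)| \;=\; \frac{4\pi^{n}}{n!}\,\frac{(1-s)^{n}}{s^{n+1}} \;=\; \frac{4\pi^{n}}{n!}\,\frac{\tanh^{2n}r}{(1-\tanh^{2}r)^{n+1}}.
\]
Combining the two steps produces \eqref{eq:volBergmanball}, and substituting into the definition \eqref{eqn:averaging} gives \eqref{eq:averaging} at once.

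The main obstacles are bookkeeping rather than genuine difficulty: first, verifying that the Jacobian factor is the constant $\bfrho(z)^{-(n+1)}$ (for which the factorization $\sigma_z = \delta_{\bfrho(z)^{-1/2}}\circ h_z$ is cleaner than invoking the Bergman kernel transformation law); and second, keeping track of the center and radius of the $(x,y)$-disk obtained by completing the square in order to confirm that $\calU$ imposes no additional restriction on the integration.
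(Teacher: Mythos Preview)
Your argument is correct. Note, however, that the paper does not actually prove this lemma: it is quoted, together with the surrounding lemmas, from \cite{LS20} (``The following results can be found in \cite{LS20}''), so there is no in-paper proof to compare against. Your approach---reduce to the ball at $\bfi$ via the automorphism $\sigma_z$ (whose real Jacobian is the constant $\bfrho(z)^{-(n+1)}$ by the factorization $\sigma_z=\delta_{\bfrho(z)^{-1/2}}\circ h_z$), then integrate the explicit inequality defining $D(\bfi,r)$---is the natural one and is carried out cleanly.

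Two minor remarks. First, your check that the $(x,y)$-disk lies in $\{y>R\}$ is actually automatic: the defining inequality $y-R>\tfrac{s}{4}[(1+y)^2+x^2]$ has strictly positive right-hand side, so every point of the set (which equals the full disk after completing the square) already satisfies $y>R$; no separate estimate of the center and radius is needed. Second, the radial integration over $\bbC^{n-1}$ as written presumes $n\ge 2$; for $n=1$ the $w'$-integration is vacuous and the formula follows directly from the area of the $(x,y)$-disk, which you may want to mention in passing.
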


%\begin{corollary}\label{cor:mu_r}
%For any $z\in\mathcal{U}$ and $r>0$, the averaging function
%\[
%\widehat{\mu}_r (z) ~=~ \frac {n!}{4\pi^n} \frac {(1-\tanh^2 r)^{n+1}} {\tanh^{2n} r}
%\frac {\mu(D(z,r))} {\bfrho(z)^{n+1}}.
%\]
%\end{corollary}

\begin{lemma}\label{blem}
Given $r > 0$, the inequalities
\begin{equation}\label{eqn:eqvltquan}
\frac {1-\tanh (r)}{1+ \tanh (r)} ~\leq~ \frac{|\bfrho(z,u)|}{|\bfrho(z,v)|}
~\leq~ \frac {1+\tanh (r)}{1-\tanh (r)}
\end{equation}
hold for all $z,u,v\in \calU$ with $\beta(u,v)\leq r$.
\end{lemma}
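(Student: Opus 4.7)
The plan is to derive both inequalities from the single identity
\[
|\bfrho(z, w)|^{2} \;=\; \bfrho(z)\,\bfrho(w)\,\cosh^{2}\beta(z, w),
\]
which is a direct rewriting of the definition of $\beta$. This gives the factorization
\[
\frac{|\bfrho(z, u)|}{|\bfrho(z, v)|} \;=\; \sqrt{\frac{\bfrho(u)}{\bfrho(v)}}\;\cdot\;\frac{\cosh \beta(z, u)}{\cosh \beta(z, v)},
\]
so the claim reduces to showing each factor on the right lies in $[e^{-r}, e^{r}]$, since the product interval $[e^{-2r}, e^{2r}]$ coincides with $[(1-\tanh r)/(1+\tanh r),\,(1+\tanh r)/(1-\tanh r)]$.

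The hyperbolic-cosine factor is the easy one. The triangle inequality for the Bergman metric gives $|\beta(z, u) - \beta(z, v)| \leq \beta(u, v) \leq r$, and the elementary estimate $\cosh(a \pm r)/\cosh a \in [e^{-r}, e^{r}]$ for $a \geq 0$ then closes the matter, uniformly in $z \in \calU$.

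The factor $\sqrt{\bfrho(u)/\bfrho(v)}$ needs more care. Applying the identity above with $z = v$ gives $|\bfrho(u, v)|^{2} = \bfrho(u)\bfrho(v)\cosh^{2}\beta(u, v)$. I pair this with the sharpened form
\[
2|\bfrho(u, v)| \;\geq\; \bfrho(u) + \bfrho(v),
\]
which is stronger than \eqref{eqn:elemtryeq1} as stated but follows from the same chain $|\bfrho(u, v)| \geq \RePt \bfrho(u, v) \geq (\bfrho(u)+\bfrho(v))/2$ via Cauchy--Schwarz on $|u' \cdot \overline{v'}|$ and AM--GM on $|u'||v'|$. Squaring and dividing by $4\bfrho(u)\bfrho(v)$, then using the elementary identity $(a+b)^{2}/(4ab) = \cosh^{2}\!\bigl(\tfrac{1}{2}\log(a/b)\bigr)$, converts the inequality into
\[
\cosh^{2}\!\bigl(\tfrac{1}{2}\log(\bfrho(u)/\bfrho(v))\bigr) \;\leq\; \cosh^{2}\beta(u, v) \;\leq\; \cosh^{2} r,
\]
which rearranges to $\bfrho(u)/\bfrho(v) \in [e^{-2r}, e^{2r}]$, as required.

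The main obstacle is precisely this last estimate: \eqref{eqn:elemtryeq1} as stated only yields the loose $\bfrho(u)/\bfrho(v) \leq 4\cosh^{2} r$, which is not sharp enough for the constant $(1+\tanh r)/(1-\tanh r)$ in the lemma, and the sharpened sum-version above is therefore essential. An alternative route would be to first use Lemma \ref{lem:rho(sigma_a)} to reduce to $v = \bfi$ (the target ratio is invariant under simultaneously applying $\sigma_{v}$ to $z, u, v$) and then analyze $u \in D(\bfi, r)$ head-on; but the same sharpened inequality inevitably re-appears as the controlling estimate for the admissible range of $\bfrho(u)$.
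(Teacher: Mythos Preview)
Your proof is correct. The paper does not actually prove this lemma; it is quoted (together with the surrounding lemmas) from \cite{LS20} without argument, so there is no ``paper's own proof'' to compare against. Your route via the identity $|\bfrho(z,w)|^{2}=\bfrho(z)\bfrho(w)\cosh^{2}\beta(z,w)$, the triangle inequality for $\beta$, and the sharpened estimate $2|\bfrho(u,v)|\geq\bfrho(u)+\bfrho(v)$ is clean and gives exactly the stated constants; the verification that $(1+\tanh r)/(1-\tanh r)=e^{2r}$ and that $(a+b)^{2}/(4ab)=\cosh^{2}\!\bigl(\tfrac12\log(a/b)\bigr)$ ties the pieces together precisely. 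The only comment is cosmetic: in bounding the $\cosh$ ratio you can simply note that for nonnegative $a,b$ with $|a-b|\leq r$ one has $\cosh b/\cosh a\leq\cosh(a+r)/\cosh a=\cosh r+\tanh a\,\sinh r\leq e^{r}$ and invoke symmetry, which is exactly what you do.
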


\begin{lemma}\label{lem:similar.Zhu 2.24}
Suppose $r>0$ and $p>0$. Then there exists a positive constant $C$ depending on $r$ such that
\[
|f(z)|^p \leq \frac{C}{\bfrho(z)^{n+1}}\int_{D(z,r)} |f(w)|^p dV(w)
\]
for all $f\in H(\calU)$ and all $z\in\calU$.
\end{lemma}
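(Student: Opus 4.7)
The plan is to reduce the inequality to the base point $\bfi=(0',i)$ via the affine automorphism $\sigma_z$ and then apply the classical sub-mean-value property of $|f|^p$ on a fixed Euclidean ball inside $D(\bfi,r)$. The key is that $\sigma_z^{-1}=h_z^{-1}\circ\delta_{\bfrho(z)^{1/2}}$ is affine, so its complex Jacobian is constant, and that the Bergman metric is invariant under $\sigma_z$, so all $z$-dependence collapses into a clean power of $\bfrho(z)$.

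First I would set $g:=f\circ\sigma_z^{-1}$, so that $g\in H(\calU)$ and $g(\bfi)=f(z)$. Since $\sigma_z^{-1}(\bfi)=z$ and automorphisms preserve Bergman balls, $\sigma_z^{-1}(D(\bfi,r))=D(z,r)$. The substitution $w=\sigma_z^{-1}(u)$ then yields
\[
\int_{D(z,r)} |f(w)|^p\,dV(w) = |J_{\sigma_z^{-1}}|^2 \int_{D(\bfi,r)} |g(u)|^p\,dV(u).
\]
To identify the constant $|J_{\sigma_z^{-1}}|^2$, I would invoke the biholomorphic transformation rule for the Bergman kernel, $K(\phi(u),\phi(u))\,|J_\phi(u)|^2=K(u,u)$, with $\phi=\sigma_z^{-1}$ at $u=\bfi$, combined with the explicit formula $K(w,w)=\tfrac{n!}{4\pi^n}\,\bfrho(w)^{-(n+1)}$ and $\bfrho(\bfi)=1$, which gives $|J_{\sigma_z^{-1}}|^2\equiv\bfrho(z)^{n+1}$.

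For the remaining step, note that $D(\bfi,r)$ is an open neighborhood of $\bfi$ in $\calU$, so there is a Euclidean ball $B(\bfi,\delta_r)\subset D(\bfi,r)$ with $\delta_r>0$ depending only on $r$. Since $g$ is holomorphic and $p>0$, $|g|^p$ is subharmonic, so the sub-mean-value property at $\bfi$ gives
\[
|f(z)|^p = |g(\bfi)|^p \leq \frac{1}{|B(\bfi,\delta_r)|}\int_{B(\bfi,\delta_r)} |g(u)|^p\,dV(u) \leq C_r \int_{D(\bfi,r)} |g(u)|^p\,dV(u).
\]
Combining this with the change-of-variables identity above delivers the claim. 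The only point requiring genuine care is the Jacobian computation; after that, the argument is a routine consequence of the automorphism structure developed in Section~2, and the Euclidean-ball inclusion uses only that $D(\bfi,r)$ is open, so no finer geometry of the Bergman ball is needed.
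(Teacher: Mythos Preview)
Your argument is correct. The reduction to $\bfi$ via $\sigma_z^{-1}$, the identification $|J_{\sigma_z^{-1}}|^2=\bfrho(z)^{n+1}$ (either from the Bergman kernel transformation law or directly from $\sigma_z^{-1}=h_z^{-1}\circ\delta_{\bfrho(z)^{1/2}}$, noting that $\det J_{h_z^{-1}}=1$ and $\det J_{\delta_t}=t^{n+1}$), and the sub-mean-value inequality for the plurisubharmonic function $|g|^p$ on a fixed Euclidean ball $B(\bfi,\delta_r)\subset D(\bfi,r)$ all combine exactly as you describe.

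As for comparison: the paper does not actually prove this lemma. It is listed among the results imported from \cite{LS20} (``The following results can be found in \cite{LS20}, they serve as basic tools in this paper''), so there is no proof in the text to compare against. Your argument is precisely the standard one for this type of estimate on a domain whose automorphism group acts transitively, and it is essentially how the result is obtained in the cited reference as well.
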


The next lemma is a key result of \cite{LS20}, we shall use it without any explanation, since it is so natural and important. 
\begin{lemma}
Let $\mu\in\mathcal{M}_+$ and $T_{\mu}$ is bounded on $A^2(\calU)$. Then the equality 
\[
\langle T_{\mu}f,g \rangle = \int_{\calU} f\overline{g} d\mu
\]
holds for all $f,g\in A^2(\calU)$, where $\langle \cdot,\cdot \rangle$ denotes the inner product on $A^2(\calU)$.
\end{lemma}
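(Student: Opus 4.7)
The desired identity is, by the definition of $T_\mu$,
\[
\int_\calU \left(\int_\calU K(z,w)\,f(w)\,d\mu(w)\right)\overline{g(z)}\,dV(z) \;=\; \int_\calU f(w)\,\overline{g(w)}\,d\mu(w).
\]
Because $K(z,w) = \overline{K(w,z)}$ and $g(w) = \int_\calU K(w,z)\,g(z)\,dV(z)$ by the reproducing property of the Bergman kernel on $A^2(\calU)$, interchanging the order of integration on the left-hand side produces exactly the right-hand side. The whole proof therefore reduces to a Fubini justification on a dense subset of $A^2(\calU)$, followed by an extension by continuity.

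I would first establish the identity on the dense subspace $\calS \subset A^2(\calU)$ of holomorphic functions $f$ with $f(z) = O(|z_n+i|^{-\beta})$, for a fixed $\beta > n+1/2$ chosen large enough relative to both $n$ and the exponent $\alpha$ in the $\mathcal{M}_+$-hypothesis for $\mu$ (density of $\calS$ is recalled in the Introduction). For $f,g\in \calS$, the task is to verify
\[
\int_\calU \int_\calU |K(z,w)|\,|f(w)|\,|g(z)|\,d\mu(w)\,dV(z) \;<\; \infty
\]
by combining the Bergman kernel bound $|K(z,w)| \le C\,|\bfrho(z,w)|^{-n-1}$, the integral formula \eqref{eqn:keylem} applied to the inner $dV(z)$-integral, the basic inequality \eqref{eqn:elemtryeq1}, the prescribed decay of $f$ and $g$ at infinity, and the $\mathcal{M}_+$-hypothesis $\int d\mu/|w_n+i|^\alpha < \infty$. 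Once $\beta$ is chosen large enough that the exponents balance, Fubini applies and yields the identity on $\calS\times\calS$.

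To extend the formula to $A^2(\calU)$, set $f = g \in \calS$ in the identity to obtain $\int_\calU |f|^2\,d\mu = \langle T_\mu f,f\rangle \le \|T_\mu\|\,\|f\|_{A^2}^2$. Given an arbitrary $f\in A^2(\calU)$, choose $f_k \in \calS$ converging to $f$ in $A^2$-norm; by Lemma~\ref{lem:similar.Zhu 2.24} the convergence is locally uniform, hence pointwise almost everywhere, so Fatou's lemma promotes the Carleson-type estimate $\int_\calU |f|^2\,d\mu \le \|T_\mu\|\,\|f\|^2$ to all of $A^2(\calU)$. Cauchy--Schwarz then makes $(f,g)\mapsto \int_\calU f\overline{g}\,d\mu$ a bounded sesquilinear form on $A^2(\calU)\times A^2(\calU)$; since $(f,g)\mapsto \langle T_\mu f,g\rangle$ is likewise continuous by boundedness of $T_\mu$, the identity extends from $\calS\times\calS$ to the whole of $A^2(\calU)\times A^2(\calU)$ by density.

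The principal technical obstacle in this plan is the absolute-integrability estimate used to justify Fubini in the second step: it requires simultaneously controlling the diagonal singularity of $K(z,w)$, the boundary blow-up permitted by the $\mathcal{M}_+$ condition on $\mu$, and the decay of $f, g$ at infinity. Once $\beta$ is chosen large enough that these exponents balance after an application of \eqref{eqn:keylem}, the rest of the argument is a routine density/continuity extension.
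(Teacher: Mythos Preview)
The paper does not actually prove this lemma: it is quoted from \cite{LS20} with the remark ``we shall use it without any explanation,'' so there is no in-paper argument to compare against. Your outline is the standard one, and the density/continuity extension in your last paragraph is fine. The difficulty is exactly where you flag it, and the tools you list are not enough to close it.

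The gap is in the Fubini step. The hypothesis $\mu\in\mathcal{M}_+$ only controls $\mu$ at infinity (it says $\int |z_n+i|^{-\alpha}\,d\mu<\infty$, and note $|z_n+i|>1$ on $\calU$); it says nothing about how $\mu$ may concentrate near the finite boundary $b\calU$, where $\bfrho(w)\to 0$. Formula \eqref{eqn:keylem} applies to integrands of the form $\bfrho(z)^{t}/|\bfrho(z,w)|^{s}$, whereas your inner $dV(z)$-integral carries $|\bfrho(z,\bfi)|^{-\beta}$, not $\bfrho(z)^{t}$. If you pass from one to the other via \eqref{eqn:elemtryeq1} you get $\bfrho(z)^{-\beta}$ in the numerator, and then \eqref{eqn:keylem} requires $-\beta>-1$, i.e.\ $\beta<1$, the opposite of ``$\beta$ large.'' Likewise, any crude bound such as Cauchy--Schwarz on the $dV$-integral leaves a factor $\bfrho(w)^{-(n+1)/2}$ in the remaining $d\mu$-integral, and nothing in the $\mathcal{M}_+$ condition or in the decay $|f(w)|\le C|w_n+i|^{-\beta}$ (which is just a uniform bound near $b\calU$) absorbs that singularity. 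Enlarging $\beta$ does not help: it only improves decay at infinity, not the boundary behaviour.

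What is missing is the use of the \emph{boundedness} of $T_\mu$ at the very first step rather than the last. In \cite{LS20} (also invoked in this paper) boundedness of $T_\mu$ is equivalent to $\widetilde{\mu}\in L^\infty$, hence to $\mu$ being a Carleson measure for $A^2(\calU)$. With that in hand, $(f,g)\mapsto\int f\overline{g}\,d\mu$ is a bounded sesquilinear form from the outset, and it suffices to check the identity on reproducing kernels $f=K_a$, $g=K_b$ (whose span is dense), where it reduces to $(T_\mu K_a)(b)=\int K(b,w)K(w,a)\,d\mu(w)$, which is the very definition of $T_\mu$. In your scheme the Carleson estimate is \emph{derived} from the identity on $\calS$, so you cannot also use it to justify the Fubini step that produces that identity; the order has to be reversed.
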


\section{Averaging functions}

In this section we characterize the $L^p(d\lambda)$-behavior of averaging functions as well as its discretized version. We begin with an observation from \eqref{eq:volBergmanball} and \eqref{eqn:eqvltquan} that there is a positive constant $C$ depending only on $r$ such that
\begin{equation}\label{eq:lambdaBball}
\lambda(D(z,r)) \leq C
\end{equation} 
for all $z\in\calU$.

\begin{lemma}\label{lem:averinlp1}
Suppose $\mu\geq 0$, $r,\delta>0$ and $0<p<\infty$. 
If $\{\widehat{\mu}_{r}(a_k)\}\in l^p$ for some $r$-lattice $\{a_k\}$,
then $\widehat{\mu}_{\delta} \in L^p(\calU,d\lambda)$.
\end{lemma}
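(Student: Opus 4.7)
The plan is to deduce a pointwise comparison $\widehat{\mu}_{\delta}(z)\leq C\widehat{\mu}_{r+\delta}(a_k)$ on each cell $D(a_k,r)$, integrate using the fact that $\lambda(D(a_k,r))$ is bounded uniformly in $k$ by \eqref{eq:lambdaBball}, and then bridge the gap between the two averaging scales by a standard lattice-overlap argument.

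First, fix $z\in D(a_k,r)$. By the triangle inequality for $\beta$, we have $D(z,\delta)\subset D(a_k,r+\delta)$, so $\mu(D(z,\delta))\leq\mu(D(a_k,r+\delta))$. On the other hand, Lemma \ref{blem} (applied with the free variable equal to $z$ or to $a_k$) yields $\bfrho(z)\asymp\bfrho(a_k)$ with constants depending only on $r$, and the volume formula \eqref{eq:volBergmanball} then gives $|D(z,\delta)|\asymp\bfrho(z)^{n+1}\asymp\bfrho(a_k)^{n+1}\asymp|D(a_k,r+\delta)|$. Dividing yields
\[
\widehat{\mu}_{\delta}(z)\ \leq\ C\,\widehat{\mu}_{r+\delta}(a_k),\qquad z\in D(a_k,r),
\]
with $C=C(r,\delta)$. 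Covering $\calU$ by $\{D(a_k,r)\}$ and using \eqref{eq:lambdaBball},
\[
\int_{\calU}\widehat{\mu}_{\delta}(z)^p\,d\lambda(z)\ \leq\ \sum_k\int_{D(a_k,r)}\widehat{\mu}_{\delta}(z)^p\,d\lambda(z)\ \leq\ C\sum_k\widehat{\mu}_{r+\delta}(a_k)^p.
\]

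Next, I would upgrade the hypothesis from the $r$-average to the $(r+\delta)$-average at the lattice points. Since $\{D(a_j,r)\}$ covers $\calU$, for each $k$ the set $J_k:=\{j:D(a_j,r)\cap D(a_k,r+\delta)\neq\emptyset\}$ satisfies $D(a_k,r+\delta)\subset\bigcup_{j\in J_k}D(a_j,r)$ and consists of indices with $\beta(a_j,a_k)<2r+\delta$. By Lemma \ref{lem:decomposition}(i), $|J_k|\leq N$ for some $N=N(r,\delta)$. As before $\bfrho(a_j)\asymp\bfrho(a_k)$ for $j\in J_k$, so
\[
\widehat{\mu}_{r+\delta}(a_k)\ \leq\ C\sum_{j\in J_k}\widehat{\mu}_{r}(a_j).
\]
Raising to the $p$th power, one uses $(\sum x_j)^p\leq N^{p-1}\sum x_j^p$ if $p\geq 1$, and $(\sum x_j)^p\leq\sum x_j^p$ if $0<p<1$, to obtain $\widehat{\mu}_{r+\delta}(a_k)^p\leq C\sum_{j\in J_k}\widehat{\mu}_r(a_j)^p$. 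Summing over $k$ and interchanging, each $j$ contributes to at most $N$ indices $k$ (again by Lemma \ref{lem:decomposition}(i)), so
\[
\sum_k\widehat{\mu}_{r+\delta}(a_k)^p\ \leq\ C\sum_j\widehat{\mu}_r(a_j)^p\ <\ \infty.
\]
Combining with the previous display completes the proof.

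There is no serious obstacle here: the only care required is in the lattice-comparison step of splitting into the $p\geq 1$ and $0<p<1$ cases, and in keeping track that all "$\asymp$" constants are controlled by $r$ and $\delta$ alone so that the finite-overlap constants from Lemma \ref{lem:decomposition}(i) suffice to close the sum.
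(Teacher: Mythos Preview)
Your proof is correct and uses the same ingredients as the paper's (the covering property of the $r$-lattice, Lemma~\ref{lem:decomposition}(i), and \eqref{eq:lambdaBball}). The paper's argument is slightly more direct: instead of passing through the intermediate scale $\widehat{\mu}_{r+\delta}(a_k)$, it covers $D(z,\delta)$ immediately by the lattice balls $\{D(a_k,r)\}_{k\in N(z)}$ with $N(z)=\{k:D(a_k,r)\cap D(z,\delta)\neq\emptyset\}$, obtains the pointwise bound $\widehat{\mu}_\delta(z)\leq C\sum_{k\in N(z)}\widehat{\mu}_r(a_k)$, and then integrates against $d\lambda$, which collapses your two steps into one.
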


\begin{proof}
Assume that $\{a_k\}$ is an $r$-lattice such that $\{\widehat{\mu}_{r}(a_k)\}\in l^p$. Given $z\in\calU$, 
let 
\[
N(z):=\{k:D(a_k,r)\cap D(z,\delta) \neq \emptyset\}.
\]
Since $\{a_k\}$ is an $r$-lattice, we have $D(z,\delta) \subset \cup_{k\in N(z)} D(a_k,r)$.
Thus, 
\[
\mu(D(z,\delta)) \leq \sum_{k\in N(z)} \mu(D(a_k,r)).
\]
Together this with \eqref{eq:averaging} and \eqref{eqn:eqvltquan}, there exists a positive constant $C$ depending on $r$ and $\delta$ such that
\begin{equation}\label{eq:averinlp1}
\widehat{\mu}_{\delta}(z) \leq C \sum_{k\in N(z)} \frac{\bfrho(a_k)^{n+1}}{\bfrho(z)^{n+1}} \widehat{\mu}_r(a_k)
\leq C  \sum_{k\in N(z)} \widehat{\mu}_r(a_k).
\end{equation}

On the other hand, by Lemma \ref{lem:decomposition}, we see that $D(z,\delta)$ meets at most $N$ of the sets $D(a_k,r)$. Therefore, 
\[
\sup_{z\in\calU} |N(z)| \leq N<\infty,
\]
where $|N(z)|$ denotes the number of elements in $N(z)$. This together with \eqref{eq:averinlp1} gives 
\[
\widehat{\mu}_{\delta}(z)^p \leq C^p N^p \sum_{k\in N(z)} \widehat{\mu}_r(a_k)^p
\]
for all $z\in\calU$. Now, integrating both sides of the above against the measure $d\lambda$ and then applying Fubini's theorem, we get that there is another positive constant $C$ such that
\[
\int_{\calU} \widehat{\mu}_{\delta}(z)^p \lambda(z) \leq C \int_{\calU} \sum_{k\in N(z)} \widehat{\mu}_r(a_k)^p d\lambda(z) = C \sum_{k=1}^{\infty} \widehat{\mu}_r (a_k)^p \lambda(Q(k)),
\]
where $Q(k)=\{z\in\calU:D(z,\delta)\cap D(a_k,r) \neq \emptyset \}$. 
Note that $Q(k)\subset D(a_k,r+\delta)$ and by \eqref{eq:lambdaBball} there exists a positive constant $C$ depending on $r+\delta$ such that $\lambda(D(a_k,r+\delta))\leq C$ for all $k\geq 1$. Thus $\lambda(Q(k)) \leq C$ for all $k\geq 1$.
Combining this with the above estimate, we conclude that
\[
\int_{\calU} \widehat{\mu}_{\delta}(z)^p \lambda(z) \leq C \sum_{k=1}^{\infty} \widehat{\mu}_r (a_k)^p.
\]
This completes the proof of the lemma.
\end{proof}

%\begin{lemma}
%Suppose $\mu\geq 0$, $r>0$ and $0<p<\infty$. If for every $2r$-lattice $\{a_k\}$ we have $\{\widehat{\mu}_{2r}(a_k)\}\in l^p$,
%then $\widehat{\mu}_r \in L^p(\calU,d\lambda)$.
%\end{lemma}
%\begin{proof}
%Fix an $r$-lattice $\{a_k\}$ and partition it into $N$ subsequences $\{a_{jk}\}$, $1\leq j\leq N$, such that each subsequence is a $2r$-lattice. Then we have 
%\[
%\sum_{k=1}^{\infty} \widehat{\mu}_{2r}(a_k)^p = \sum_{j=1}^{N} \sum_{k=1}^{\infty} \widehat{\mu}_{2r}(a_{jk})^p<\infty.
%\]
%
%Let 
%\[
%I=\int_{\calU} \widehat{\mu}_r(z)^p d\lambda (z). 
%\]
%Since $\{ D(a_k,r)\}$ is an open cover of $\calU$, we have 
%\begin{align*}
%I &\leq \sum_{k=1}^{\infty} \int_{D(a_k,r)} \widehat{\mu}_r(z)^p d\lambda (z)\\
%&\leq \sum_{k=1}^{\infty} \lambda(D(a_k,r)) \sup\{\widehat{\mu}_r(z)^p:z\in D(a_k,r)\}.
%\end{align*}
%By Lemmas \ref{lem:volBergmanball} and \ref{blem} we see that there exists a positive constant $C$ such that $\lambda(D(a_k,r)) \leq C$ for all $k\geq 1$. Also, if $z\in D(a_k,r)$, then $D(z,r)\subset D(a_k,2r)$. Combining with Corollary \ref{cor:mu_r} and Lemma \ref{blem}, we obtain 
%\[
%\sup\{\widehat{\mu}_r(z)^p:z\in D(a_k,r)\} \leq C \widehat{\mu}_{2r}(a_k)^p
%\]
%for another positive constant $C$ depending only on $r$. Hence, 
%\[
%I\leq C\sum_{k=1}^{\infty} \widehat{\mu}_{2r}(a_k)^p<\infty.
%\]
%This completes the proof of the lemma.
%\end{proof}

\begin{lemma}\label{lem:averinlp2}
Suppose $\mu\geq 0$, $r>0$ and $0<p<\infty$. If $\widehat{\mu}_{2r} \in L^p(\calU,d\lambda)$, then for every $r$-lattice $\{a_k\}$ we have $\{\widehat{\mu}_r(a_k)\}\in l^p$.
\end{lemma}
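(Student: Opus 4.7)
The plan is to transfer the hypothesis $\widehat{\mu}_{2r}\in L^p(\calU,d\lambda)$ to discrete summability by establishing, for each lattice point $a_k$, a pointwise bound of the form $\widehat{\mu}_r(a_k)\leq C\,\widehat{\mu}_{2r}(z)$ valid for all $z$ in a Bergman ball around $a_k$, and then integrating against $d\lambda$ and summing.

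First I would observe that for any $z\in D(a_k,r)$ the triangle inequality for the Bergman metric yields $D(a_k,r)\subset D(z,2r)$, and hence $\mu(D(a_k,r))\leq \mu(D(z,2r))$. For the volumes, applying Lemma~\ref{blem} with $u=a_k$, $v=z$ gives $\bfrho(a_k)\sim \bfrho(z)$ on $D(a_k,r)$ with constants depending only on $r$; inserting this into the volume formula \eqref{eq:volBergmanball} produces $|D(z,2r)|\leq C\,|D(a_k,r)|$. Combining the measure inequality with this volume comparison gives the desired pointwise estimate
\[
\widehat{\mu}_r(a_k)\;=\;\frac{\mu(D(a_k,r))}{|D(a_k,r)|}\;\leq\; C\,\frac{\mu(D(z,2r))}{|D(z,2r)|}\;=\;C\,\widehat{\mu}_{2r}(z),\qquad z\in D(a_k,r).
\]

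Next I would raise this inequality to the $p$-th power and integrate in $z$ against $d\lambda$ over $D(a_k,r)$. Using that $\lambda(D(a_k,r))$ is comparable to a positive constant depending only on $r$ (the upper bound is \eqref{eq:lambdaBball}; a matching lower bound follows the same way from $d\lambda(z)=K(z,z)\,dV(z)\sim \bfrho(z)^{-(n+1)}\,dV(z)$ together with $|D(a_k,r)|\sim \bfrho(a_k)^{n+1}$), I obtain
\[
\widehat{\mu}_r(a_k)^p\;\leq\; C\int_{D(a_k,r)} \widehat{\mu}_{2r}(z)^p\,d\lambda(z).
\]
Finally, summing over $k$ and invoking Lemma~\ref{lem:decomposition}(i) with $R=r$ to produce an integer $N$ controlling the overlap of the cover $\{D(a_k,r)\}$ gives
\[
\sum_{k} \widehat{\mu}_r(a_k)^p \;\leq\; CN\int_{\calU} \widehat{\mu}_{2r}(z)^p\,d\lambda(z)\;<\;\infty,
\]
which is exactly the claim.

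There is no serious obstacle; the only care needed is to verify that every constant in the chain of comparisons (of $\bfrho$'s, Lebesgue volumes of Bergman balls, and $\lambda$-measures of Bergman balls) is uniform in $k$, which is guaranteed by the fact that the bounds in Lemma~\ref{blem} and the prefactor in the volume formula \eqref{eq:volBergmanball} depend only on $r$ and not on the center.
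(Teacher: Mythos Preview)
Your proof is correct and follows essentially the same route as the paper's: the inclusion $D(a_k,r)\subset D(z,2r)$ for $z\in D(a_k,r)$, the comparability $\bfrho(a_k)\sim\bfrho(z)$ from Lemma~\ref{blem}, integration over $D(a_k,r)$ against $d\lambda$, and the finite-overlap bound from Lemma~\ref{lem:decomposition}(i). In fact you are slightly more careful than the paper, which cites only the upper bound \eqref{eq:lambdaBball} at the step where a uniform \emph{lower} bound on $\lambda(D(a_k,r))$ is what is actually required; your remark that this lower bound follows from $d\lambda(z)\sim\bfrho(z)^{-(n+1)}dV(z)$ together with \eqref{eq:volBergmanball} fills that small gap.
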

\begin{proof}
Fix an $r$-lattice $\{a_k\}$. If $z\in D(a_k,r)$, then $D(a_k,r) \subset D(z,2r)$. By \eqref{eq:averaging} and \eqref{eqn:eqvltquan}, there exists a positive constant $C$ depending only on $r$ such that
\[
\widehat{\mu}_r(a_k) \leq C \frac{\mu(D(a_k,r))}{\bfrho(a_k)^{n+1}} \leq C \frac{\mu(D(z,2r))}{\bfrho(z)^{n+1}}\leq C \widehat{\mu}_{2r}(z)
\]
for $z\in D(a_k,r)$. This together with \eqref{eq:lambdaBball} that $\lambda(D(a_k,r)) \leq C$ for $k\geq 1$ gives 
\[
\widehat{\mu}_r(a_k)^p \leq C \int_{D(a_k,r)} \widehat{\mu}_{2r}(z)^p d\lambda(z),\quad k\geq1.
\]
Hence, 
\begin{align*}
\sum_{k=1}^{\infty} \widehat{\mu}_r(a_k)^p &\leq C \sum_{k=1}^{\infty} \int_{D(a_k,r)} \widehat{\mu}_{2r}(z)^p d\lambda(z)\\
&\leq CN \int_{\calU} \widehat{\mu}_{2r}(z)^p d\lambda(z),
\end{align*}
where $N$ is as in Lemma \ref{lem:decomposition}. This completes the proof of the lemma.
\end{proof}

As a consequence of the two lemmas above, we have the following result.

\begin{theorem}\label{thm:main5}
Suppose $\mu\geq 0$, $r,s,\delta>0$ and $0<p<\infty$. Then the following conditions are equivalent:
\begin{enumerate}
  \item[(i)] $\{\widehat{\mu}_r(a_k)\} \in l^p$ for every $r$-lattice $\{a_k\}$.
  \item[(ii)] $\{\widehat{\mu}_s(a_m)\} \in l^p$ for some $s$-lattice $\{a_m\}$.
  \item[(iii)] $\widehat{\mu}_{\delta}\in L^p(\calU,d\lambda)$.
\end{enumerate}
\end{theorem}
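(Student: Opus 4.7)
The plan is to show the cycle $(\text{i}) \Rightarrow (\text{ii}) \Rightarrow (\text{iii}) \Rightarrow (\text{i})$, using Lemmas \ref{lem:averinlp1} and \ref{lem:averinlp2} as the main building blocks. The implication $(\text{i}) \Rightarrow (\text{ii})$ is immediate (in fact tautological) once one invokes the existence of $s$-lattices for every $s>0$, which was already cited before the statement of Lemma \ref{lem:decomposition}; so essentially no work is needed there.

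For $(\text{ii}) \Rightarrow (\text{iii})$ I would simply apply Lemma \ref{lem:averinlp1} with its parameter $r$ taken equal to $s$ and with the $s$-lattice furnished by $(\text{ii})$, and with its parameter $\delta$ taken equal to the $\delta$ in the statement of the theorem. The conclusion of Lemma \ref{lem:averinlp1} is exactly $\widehat{\mu}_{\delta} \in L^p(\calU, d\lambda)$, so $(\text{iii})$ follows with no additional estimates.

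The only implication that requires some thought is $(\text{iii}) \Rightarrow (\text{i})$, because Lemma \ref{lem:averinlp2} requires the hypothesis on $\widehat{\mu}_{2r}$ (twice the lattice radius), not on $\widehat{\mu}_{\delta}$ for arbitrary $\delta$. I would bridge this gap with a two-step chain. Starting from $\widehat{\mu}_{\delta} \in L^p(\calU, d\lambda)$, first apply Lemma \ref{lem:averinlp2} with $r$ in that lemma set to $\delta/2$ to conclude that $\{\widehat{\mu}_{\delta/2}(b_j)\} \in \ell^p$ for every $\delta/2$-lattice $\{b_j\}$; then apply Lemma \ref{lem:averinlp1} (whose conclusion is valid for \emph{arbitrary} $\delta$) with input lattice $\{b_j\}$ and output radius $2r$ to obtain $\widehat{\mu}_{2r} \in L^p(\calU, d\lambda)$ for the radius $r$ given in $(\text{i})$. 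Finally, a second application of Lemma \ref{lem:averinlp2} yields $\{\widehat{\mu}_r(a_k)\} \in \ell^p$ for every $r$-lattice $\{a_k\}$, which is $(\text{i})$.

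There is no genuine obstacle here; the proof is essentially a bookkeeping exercise in invoking the two preceding lemmas in the correct order. The one point worth highlighting is the asymmetry of radii between the two lemmas: Lemma \ref{lem:averinlp1} takes a lattice input and gives an $L^p(d\lambda)$ output at any radius, while Lemma \ref{lem:averinlp2} requires an $L^p(d\lambda)$ input at twice the radius of the lattice output. This asymmetry is precisely what forces a two-step argument in $(\text{iii}) \Rightarrow (\text{i})$ rather than a single direct application, but once the intermediate radius $\delta/2$ is chosen, the rest is immediate.
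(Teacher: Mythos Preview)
Your approach is correct and is precisely what the paper intends: it presents Theorem~\ref{thm:main5} simply as ``a consequence of the two lemmas above'' with no further proof, and your chain of implications via Lemmas~\ref{lem:averinlp1} and~\ref{lem:averinlp2} (including the two-step bridge through radius $\delta/2$ to handle the $2r$-asymmetry in Lemma~\ref{lem:averinlp2}) spells this out exactly. One small remark: the claim that $(\text{i})\Rightarrow(\text{ii})$ is ``tautological'' is literally true only when $s=r$, since (i) concerns $r$-lattices and (ii) concerns $s$-lattices for independently fixed $r,s$; but this is harmless, because once your cycle establishes $(\text{iii})\Rightarrow(\text{i})$ for \emph{every} radius, one can pass from (i) at radius $r$ through (iii) to (i) at radius $s$ and hence to (ii) for arbitrary $s$.
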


\section{The case $p\geq 1$}\label{sec:p>=1}

In this section we prove Theorem \ref{thm:main1} in the case of $p\geq 1$. The left case $0<p<1$ will be discussed in next two sections. Before it, we shall briefly review the notion of Schatten class operators. 

For a positive compact operator $T$ on a a separable Hilbert space $H$, there exists an orthonormal set $\{e_k\}$ in $H$ and a sequence $\{\lambda_k\}$ that decreases to $0$ such that
\[
Tx=\sum_k \lambda_k \langle x,e_k \rangle e_k
\]
for all $x\in H$, where $\langle,\rangle$ denotes the inner product on $H$. For $0<p<\infty$, we say that a positive operator $T$ belongs to the Schatten class $S_p(H)$ if 
\[
\|T\|_p :=\left[ \sum_k \lambda_k^p \right]^{1/p}<\infty.
\]
More generally, given a compact (not necessarily positive) operator $T$ on $H$, we say that $T\in S_p(H)$ if the positive operator $|T|=(T^* T)^{1/2}$ belongs to $S_p(H)$. In this case, we define $\|T\|_p=\| |T| \|_p$.

We shall recall some basic facts about $S_p(H)$, which we need later. Details can be found in \cite{Zhu07}.
\begin{enumerate}
  \item For $T\in S_1(H)$ and an orthonormal basis $\{e_k\}$ for $H$, the sum
  \[
  \mathrm{tr}(T) = \sum_k \langle Te_k,e_k \rangle
  \]
  is absolutely convergent and independent of the choice of $\{e_k\}$. 
  This sum is called the trace of $T$.
  \item For a positive compact operator $T$ on $H$ and $0<p<\infty$, 
  $T\in S_p(H)$ if and only if $T^p\in S_1$ and $\|T\|_p^p=\|T^p\|_1$.
  \item Suppose $T$ is a positive operator on $H$ and $x$ is a unit vector in $H$. 
  If $p\geq 1$, then $\langle T^p x,x \rangle \geq \langle Tx,x \rangle^p$.
  \item For a compact operator $T$ on $H$, if $p\geq 1$, then $T\in S_p(H)$ if and only if 
  \[
  \sup \sum_k |\langle Te_k,e_k \rangle|^p<\infty,
  \]
  where the supremum is taken over all orthonormal set $\{e_k\}$. 
  Moreover, the left side of the above is the same as $\|T\|_p^p$ for $T\in S_p(H)$.
\end{enumerate}

We will take $H=A^2(\calU)$ in our considerations and, in that case, hereafter, we write $S_p=S_p(A^2(\calU))$. 

\begin{lemma}\label{lem:KinL1(mu)}
Let $\mu\in\mathcal{M}_+$. If $K(z,z)\in L^1(\mu)$, then $T_{\mu}$ is compact.
\end{lemma}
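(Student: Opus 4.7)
The plan is to factor $T_\mu$ as $B^{*}B$, where $B\colon A^{2}(\calU)\to L^{2}(\calU,d\mu)$ is the inclusion operator (well-defined under the hypothesis), and then to show that $B$ is Hilbert--Schmidt. This will give at once that $T_\mu$ is trace class, and hence compact.

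First I would verify that $B$ is bounded. The reproducing kernel estimate $|f(z)|^{2}\le K(z,z)\|f\|_{A^2}^{2}$ for $f\in A^{2}(\calU)$, combined with the hypothesis $K(\cdot,\cdot)\in L^{1}(\mu)$, yields
\[
\int_{\calU}|f|^{2}\,d\mu\le \|f\|_{A^2}^{2}\int_{\calU}K(z,z)\,d\mu(z)<\infty,
\]
so $B$ is bounded. Next, writing $K_{z}(w):=K(w,z)\in A^{2}(\calU)$, the reproducing property together with the definition of the adjoint gives, for $f\in A^{2}(\calU)$,
\[
(B^{*}Bf)(z)=\langle Bf,BK_{z}\rangle_{L^{2}(d\mu)}=\int_{\calU}f(w)\,\overline{K(w,z)}\,d\mu(w)=\int_{\calU}K(z,w)f(w)\,d\mu(w)=T_\mu f(z).
\]
Thus $T_\mu$ coincides with $B^{*}B$ on a dense subspace and hence extends to this bounded operator. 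Finally, fixing any orthonormal basis $\{e_k\}$ of $A^{2}(\calU)$ and using the standard identity $\sum_k|e_k(z)|^{2}=K(z,z)$ together with Tonelli's theorem,
\[
\|B\|_{\mathrm{HS}}^{2}=\sum_k\int_{\calU}|e_k(z)|^{2}\,d\mu(z)=\int_{\calU}K(z,z)\,d\mu(z)<\infty.
\]
Hence $B$ is Hilbert--Schmidt, so $T_\mu=B^{*}B$ is trace class and in particular compact.

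The only real subtlety lies in the identification $T_\mu=B^{*}B$: the integral defining $T_\mu$ is a priori only meaningful on a dense subspace, and the inner-product formula from the last lemma of Section~2 presumes boundedness. The factorization sidesteps both issues by computing $B^{*}B$ explicitly via the reproducing property and observing that its formula matches $T_\mu$ pointwise; a bounded extension of $T_\mu$ then comes for free. Everything else reduces to the reproducing estimate and a routine Tonelli interchange.
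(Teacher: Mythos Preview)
Your proof is correct and in fact establishes more than the lemma asserts: you obtain $T_\mu\in S_1$, not merely compactness. The paper proceeds differently. Rather than factoring $T_\mu=B^{*}B$ and computing the Hilbert--Schmidt norm of the inclusion $B$, it shows that the Berezin transform $\widetilde{\mu}(z)$ vanishes as $z\to b\calU\cup\{\infty\}$ and then invokes the compactness criterion from \cite{LS20}. Concretely, the paper writes
\[
\widetilde{\mu}(z)=\int_{\calU}F(z,w)\,K(w,w)\,d\mu(w),\qquad F(z,w)=\Bigl(\tfrac{|K_z(w)|}{\|K_z\|_2\,\|K_w\|_2}\Bigr)^{2},
\]
observes that $F$ is bounded and that $F(z,w)\to 0$ for each fixed $w$, and applies dominated convergence with the integrable majorant $K(w,w)$. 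Your argument is more self-contained and quantitatively sharper (it yields trace-class membership directly, which foreshadows Lemma~\ref{lem:T_finSp}); the paper's argument instead leverages the Berezin-transform machinery already developed in \cite{LS20}, keeping the exposition aligned with the viewpoint used elsewhere in the paper.
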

\begin{proof}
Note that $K(z,z)=\|K_z\|_2^2$, thus we have
\begin{align*}
\widetilde{\mu}(z) &=\int_{\calU} \left\{\frac{|K_z(w)|}{\|K_z\|_2 \|K_w\|_2}\right\}^2 K(w,w) d\mu(w)\\
&:=\int_{\calU} F(z,w) K(w,w) d\mu(w)
\end{align*}
On one hand, from the proof of \cite[Lemma 2.12]{LS20} we can see that $K_z \|K_z\|_2^{-1}\to 0$ uniformly on every compact subset of $\calU$ as $z\to b\calU\cup\{\infty\}$, where $b\calU=\{z\in \mathbb{C}^n: \bfrho(z)=0\}$ denotes the boundary of $\calU$. Therefore, for every $w\in\calU$, $F(z,w)\to 0$ as $z\to b\calU\cup\{\infty\}$. On the other hand, it follows by \eqref{eqn:elemtryeq1} that $F(z,w)$ is bounded. Hence, the dominated convergence theorem allows us to take the limit inside the integral of the above and deduce that $\widetilde{\mu}$ vanishes on $b\calU\cup\{\infty\}$. This implies by \cite[Theorem 1.2]{LS20} that $T_{\mu}$ is compact.
\end{proof}

Before proving the following lemma, we need to clarify that the well definition of $T_f$ with $f\in L^p(\calU,d\lambda)$ for $1\leq p<\infty$. Let $d\mu=|f|dV$, it suffices to show that 
\begin{equation}\label{eq:T_finSp}
\int_{\calU} \frac{d\mu(z)}{|\bfrho(\bfi,z)|^{\alpha}}<\infty
\end{equation}
for sufficiently large $\alpha$. When $1<p<\infty$, let $p^{\prime}=p/(p-1)$ be the conjugate index of $p$,  by the H\"older's inequality we have 
\begin{align*}
\int_{\calU} \frac{d\mu(z)}{|\bfrho(\bfi,z)|^{\alpha}} &= 
\int_{\calU} \frac{|f(z)| dV(z)}{|\bfrho(\bfi,z)|^{(n+1)/p+\alpha-(n+1)/p}}\\
&\leq \left[\int_{\calU} \frac{|f(z)|^p}{|\bfrho(\bfi,z)|^{n+1}} dV(z)\right]^{1/p}
\left[\int_{\calU} \frac{dV(z)}{|\bfrho(\bfi,z)|^{(\alpha-(n+1)/p)p^{\prime}}}\right]^{1/p^{\prime}}\\
&\leq C \left[\int_{\calU} |f(z)|^p K(z,z) dV(z)\right]^{1/p}
\left[\int_{\calU} \frac{dV(z)}{|\bfrho(\bfi,z)|^{(\alpha-(n+1)/p)p^{\prime}}}\right]^{1/p^{\prime}},
\end{align*}
where the last inequality follows from \eqref{eqn:elemtryeq1}.
By \eqref{eqn:keylem}, the second integral of above is finite if and only if 
$(\alpha-(n+1)/p)p^{\prime}>n+1$, which is equivalent to $\alpha>n+1$. Hence, in the case of $1<p<\infty$, \eqref{eq:T_finSp} holds for $\alpha>n+1$. To prove the remained case $p=1$, by letting $\alpha=n+1$ and using \eqref{eqn:elemtryeq1} again, we obtain
\[
\int_{\calU} \frac{d\mu(z)}{|\bfrho(\bfi,z)|^{\alpha}}=\int_{\calU} \frac{|f(z)|}{|\bfrho(\bfi,z)|^{n+1}} dV(z) 
\leq C \int_{\calU} |f(z)| K(z,z) dV(z),
\]
as desired.

\begin{lemma}\label{lem:T_finSp}
Let $1\leq p<\infty$. If $f\in L^p(\calU,d\lambda)$, then $T_f \in S_p$.
\end{lemma}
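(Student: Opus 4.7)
The plan is to prove the two endpoint estimates $p=1$ and $p=\infty$ and recover $1<p<\infty$ by complex interpolation. First I would reduce to $f\ge 0$ by decomposing $f=(f_1-f_2)+i(f_3-f_4)$ with $f_j\ge 0$ in $L^p(\calU,d\lambda)$ (taking $f_j$ to be the positive/negative parts of $\RePt f$ and $\ImPt f$, so $f_j\le|f|$) and using that $S_p$ is closed under finite linear combinations for $p\ge 1$; this reduces everything to the case where $f\ge 0$, so that $T_f$ is a positive operator.

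For the endpoint $p=1$, the hypothesis $f\in L^1(\calU,d\lambda)$ translates, with $d\mu=f\,dV$, into $\int K(z,z)\,d\mu(z)<\infty$; Lemma~\ref{lem:KinL1(mu)} then gives that $T_f$ is compact. Picking any orthonormal basis $\{e_k\}$ of $A^2(\calU)$, the inner-product identity $\langle T_f e_k,e_k\rangle=\int f|e_k|^2\,dV$ (from the last lemma of Section~2), Tonelli's theorem (legitimate since $f\ge 0$), and the reproducing-kernel expansion $\sum_k|e_k(z)|^2=K(z,z)$ combine to give
\[
\sum_k \langle T_f e_k,e_k\rangle=\int_\calU f(z)\,K(z,z)\,dV(z)=\|f\|_{L^1(\calU,d\lambda)}<\infty.
\]
Since $T_f$ is positive and compact with summable diagonal in one, hence every, orthonormal basis, $T_f\in S_1$ with $\|T_f\|_{S_1}=\|f\|_{L^1(\calU,d\lambda)}$.

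For $f\in L^\infty(\calU,d\lambda)=L^\infty(\calU)$ the trivial bound $|\langle T_f g,g\rangle|=\left|\int f|g|^2\,dV\right|\le \|f\|_{L^\infty}\|g\|_{A^2}^2$ shows $\|T_f\|_{\mathrm{op}}\le\|f\|_{L^\infty(\calU,d\lambda)}$. Combined with the preceding paragraph (extended to complex-valued $f$ via the four-positive-parts decomposition, which only worsens the constant by a fixed factor), the linear map $\Phi:f\mapsto T_f$ is bounded from $L^1(\calU,d\lambda)$ into $S_1$ and from $L^\infty(\calU,d\lambda)$ into the space of bounded operators on $A^2(\calU)$. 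Complex interpolation of the Lebesgue scale $[L^1(d\lambda),L^\infty(d\lambda)]_\theta=L^p(d\lambda)$ with $1/p=1-\theta$, together with Calder\'on's complex interpolation of the Schatten scale (cf.~\cite{Zhu07}), then yields $\Phi:L^p(\calU,d\lambda)\to S_p$ boundedly for every $1\le p\le\infty$, giving the lemma.

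The principal obstacle is the $p=1$ trace computation: to invoke the diagonal-sum formula for the trace one must \emph{first} know that $T_f$ is compact (supplied by Lemma~\ref{lem:KinL1(mu)}) and one must justify the interchange of summation and integration (legitimate by Tonelli since $f|e_k|^2\ge 0$). A self-contained alternative avoiding the citation of Schatten-ideal interpolation would be a direct Riesz--Thorin argument based on property~(4) recorded in Section~\ref{sec:p>=1}: setting $h_k(z):=|e_k(z)|^2/K(z,z)$, one has $\int h_k\,d\lambda=1$ by orthonormality and $\sum_k h_k\le 1$ by Bessel's inequality, so Riesz--Thorin applied to the averaging map $f\mapsto\{\int f\,h_k\,d\lambda\}_k$ from $L^p(\calU,d\lambda)$ into $\ell^p$ produces $\sum_k|\langle T_f e_k,e_k\rangle|^p\le\|f\|_{L^p(\calU,d\lambda)}^p$ uniformly over orthonormal sets $\{e_k\}$; together with a routine compactness-by-approximation argument ($L^1\cap L^p$ is dense in $L^p(d\lambda)$ and the $p=1$ bound provides a Cauchy sequence in $S_p$), property~(4) then delivers $T_f\in S_p$.
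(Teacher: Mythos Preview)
Your argument is correct and follows essentially the same route as the paper: reduce to the endpoints, use Lemma~\ref{lem:KinL1(mu)} to obtain compactness at $p=1$, compute the diagonal sum $\sum_k\langle T_f e_k,e_k\rangle=\int f\,K(\cdot,\cdot)\,dV$ via the reproducing-kernel expansion, and interpolate against the trivial $p=\infty$ bound. The only cosmetic difference is that you first reduce to $f\ge 0$ and use an orthonormal basis with the equality $\sum_k|e_k|^2=K(\cdot,\cdot)$, whereas the paper keeps $f$ complex and bounds $\sum_k|\langle T_f e_k,e_k\rangle|\le\int|f|\sum_k|e_k|^2\,dV\le\int|f|\,d\lambda$ over arbitrary orthonormal sets.
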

\begin{proof}
By the Riesz-Thorin type interpolation theorem for Schatten classes (see \cite[Section 2.2]{Zhu07} for example),
it suffices to show that the map $f\mapsto T_f$ is bounded from $L^1(\calU,d\lambda)$ into $S_1$ (the case $p=\infty$ is trivial). So, assume that $f\in L^1(\calU,d\lambda)$. Then $K(z,z)\in L^1(\mu)$ for $d\mu=|f|dV$.
According to Lemma \ref{lem:KinL1(mu)}, we get that $T_{\mu}$ which is $T_{|f|}$ is compact, and so is $T_f$. To estimate the trace of $T_f$, let $\{e_k\}$ be any orthonormal set in $A^2(\calU)$. Note that 
\[
\langle T_f e_k, e_k \rangle= \int_{\calU} f |e_k|^2 dV
\]
for each $k$. Also, recall that $\sum |e_k(z)|^2 \leq K(z,z)$. Therefore, we have 
\[
\sum_k \left|\langle T_f e_k, e_k \rangle\right| \leq  \int_{\calU} |f| \sum_k |e_k|^2 dV \leq \int_{\calU} |f| d\lambda.
\]
It follows that $T_f\in S_1$ and 
\[
\|T_f\|_1 \leq \int_{\calU} |f| d\lambda.
\]
This completes the proof of the lemma.
\end{proof}

For a bounded linear operator on $A^2(\calU)$, the Berezin transform $\widetilde{T}$ of $T$ is defined by 
\[
\widetilde{T}(z)=\langle Tk_z,k_z \rangle, \quad z\in\calU.
\]
For $\mu\in\mathcal{M}_+$, if $T_{\mu}$ is bounded on $A^2(\calU)$, then $\widetilde{T_{\mu}}=\widetilde{\mu}$.

\begin{lemma}
If $T$ is a positive operator on $A^2(\calU)$, then $T\in S_1$ if and only if $\widetilde{T}\in L^1(\calU,\lambda)$. Moreover, 
\begin{equation}\label{eq:trace}
\mathrm{tr}(T)=\int_{\calU} \widetilde{T}(z) d\lambda(z).
\end{equation}
\end{lemma}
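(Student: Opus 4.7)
The plan is to exploit the positive square root and the reproducing property of $k_z$ to rewrite $\widetilde{T}$ as a sum of modulus-squares of holomorphic functions, then swap sum and integral by Tonelli. Set $S=T^{1/2}$, so $T=S^2$ with $S$ positive and bounded on $A^2(\calU)$. For any orthonormal basis $\{e_j\}$ of $A^2(\calU)$ and any $z\in\calU$, the identity
\[
\widetilde{T}(z) \;=\; \langle S k_z, S k_z\rangle \;=\; \|Sk_z\|_2^2 \;=\; \sum_{j} |\langle Sk_z, e_j\rangle|^2 \;=\; \sum_{j}|\langle k_z, S e_j\rangle|^2
\]
holds by Parseval, using self-adjointness of $S$. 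Since $Se_j\in A^2(\calU)$, the reproducing property gives $\langle f, k_z\rangle = f(z)/\sqrt{K(z,z)}$ for $f\in A^2(\calU)$, hence
\[
\widetilde{T}(z) \;=\; \frac{1}{K(z,z)}\sum_{j} |(Se_j)(z)|^2 .
\]

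Next I would integrate against $d\lambda(z) = K(z,z)\,dV(z)$. Since every term in the series is non-negative, Tonelli's theorem permits the exchange of sum and integral, yielding
\[
\int_{\calU} \widetilde{T}(z)\,d\lambda(z) \;=\; \sum_{j} \int_{\calU} |(Se_j)(z)|^2\,dV(z) \;=\; \sum_{j} \|Se_j\|_2^2 \;=\; \sum_{j} \langle T e_j, e_j\rangle,
\]
an equality in $[0,+\infty]$ valid for \emph{any} orthonormal basis $\{e_j\}$.

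To conclude, I would invoke the standard fact that a positive bounded operator $T$ on a Hilbert space lies in $S_1$ if and only if $\sum_j \langle Te_j,e_j\rangle<\infty$ for some (equivalently, every) orthonormal basis, and that when this holds the sum equals $\mathrm{tr}(T)$ (this follows directly from the spectral decomposition of the positive compact operator given in the review above, together with the fact that a positive operator with finite $\sum \langle Te_j,e_j\rangle$ is automatically compact since $T^{1/2}$ is Hilbert--Schmidt). Combining this with the displayed identity gives $T\in S_1$ if and only if $\widetilde{T}\in L^1(\calU,d\lambda)$, and in that case $\mathrm{tr}(T) = \int_{\calU} \widetilde{T}\,d\lambda$, which is \eqref{eq:trace}.

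There is no real obstacle here; the only point that needs care is the correct form of the reproducing identity for the normalized kernel $k_z$ (so that the factor $K(z,z)$ cancels exactly against $d\lambda/dV$), and the justification of interchanging the sum and integral, both of which are handled by positivity.
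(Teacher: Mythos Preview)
Your proof is correct and is precisely the argument the paper has in mind: the paper omits the details and simply refers to \cite[Theorem~6.4]{Zhu07}, whose proof proceeds exactly as you describe (take $S=T^{1/2}$, apply Parseval and self-adjointness to write $\widetilde{T}(z)=K(z,z)^{-1}\sum_j |(Se_j)(z)|^2$, then integrate against $d\lambda$ and use Tonelli). Your care with the normalization of $k_z$ and the justification via positivity is exactly what is needed.
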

\begin{proof}
The proof is similar to that of \cite[Theorem 6.4]{Zhu07}, so we omit the details here.
\end{proof}

\begin{lemma}\label{lem:TuleqTaver}
Suppose $r>0$ and $\mu\in\mathcal{M}_+$. If $T_{\widehat{\mu}_r}$ is bounded on $A^2(\calU)$, then so is $T_{\mu}$ with $T_{\mu}\leq C_r T_{\widehat{\mu}_r}$ for a constant $C_r>0$.
\end{lemma}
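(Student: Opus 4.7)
The plan is to deduce the operator inequality from a pointwise one, using Fubini on the definition of $\widehat{\mu}_r$, together with the sub-mean-value estimate of Lemma~\ref{lem:similar.Zhu 2.24}. Since $T_\mu$ is positive, it is enough to prove
\[
\langle T_\mu f,f\rangle \;\leq\; C_r \langle T_{\widehat{\mu}_r} f,f\rangle
\]
for every $f$ in the dense subset of $A^2(\calU)$ on which $T_\mu$ is a priori defined (the $f(z)=O(|z_n+i|^{-\alpha})$ functions). Once this is shown, the boundedness of $T_{\widehat{\mu}_r}$ extends $T_\mu$ by continuity to a bounded positive operator satisfying the stated comparison.

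First I would use the reproducing-formula-type identity recalled in the last lemma of Section~2 (combined with the fact $d\widehat{\mu}_r = \widehat{\mu}_r\,dV$) to rewrite both quadratic forms as integrals of $|f|^2$:
\[
\langle T_\mu f,f\rangle = \int_{\calU} |f|^2\,d\mu,\qquad \langle T_{\widehat{\mu}_r} f,f\rangle = \int_{\calU} |f(z)|^2\,\widehat{\mu}_r(z)\,dV(z).
\]
Expanding $\widehat{\mu}_r(z) = |D(z,r)|^{-1}\mu(D(z,r))$ and swapping the order of integration (using that $w\in D(z,r)\iff z\in D(w,r)$), I get
\[
\langle T_{\widehat{\mu}_r} f,f\rangle \;=\; \int_{\calU}\!\left(\int_{D(w,r)} \frac{|f(z)|^2}{|D(z,r)|}\,dV(z)\right) d\mu(w).
\]

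The next step is to bound the inner integral from below by a constant times $|f(w)|^2$. For $z\in D(w,r)$, Lemma~\ref{lem:volBergmanball} gives $|D(z,r)|\asymp \bfrho(z)^{n+1}$, and \eqref{eqn:eqvltquan} applied with $u=z$, $v=w$, $z=w$ (combined with \eqref{eqn:elemtryeq1}) yields $\bfrho(z)\asymp \bfrho(w)$; hence there is a constant, depending only on $r$, with $|D(z,r)|\leq C_r|D(w,r)|$ throughout $D(w,r)$. Consequently
\[
\int_{D(w,r)} \frac{|f(z)|^2}{|D(z,r)|}\,dV(z) \;\geq\; \frac{1}{C_r\,|D(w,r)|}\int_{D(w,r)}|f(z)|^2\,dV(z).
\]
Lemma~\ref{lem:similar.Zhu 2.24} with $p=2$ gives $|f(w)|^2 \leq C\bfrho(w)^{-(n+1)}\int_{D(w,r)}|f(z)|^2dV(z)\asymp |D(w,r)|^{-1}\int_{D(w,r)}|f(z)|^2 dV(z)$, so the right-hand side is bounded below by $C_r^{-1}|f(w)|^2$.

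Putting these pieces together yields
\[
\langle T_{\widehat{\mu}_r} f,f\rangle \;\geq\; C_r^{-1}\!\int_{\calU}|f(w)|^2 d\mu(w) \;=\; C_r^{-1}\langle T_\mu f,f\rangle,
\]
which is the desired inequality. The only mild subtlety, and what I view as the sole obstacle, is being careful about the domain of $T_\mu$: one verifies the pointwise manipulations on the dense subspace where $T_\mu f$ is defined as an integral, so that Fubini applies unconditionally; extension to all of $A^2(\calU)$ is then automatic from the just-proved quadratic-form bound together with the assumed boundedness of $T_{\widehat{\mu}_r}$.
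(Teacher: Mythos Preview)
Your proof is correct and follows essentially the same route as the paper: rewrite both quadratic forms as $\int|f|^2$, apply Fubini using $w\in D(z,r)\iff z\in D(w,r)$, compare $|D(z,r)|$ with $|D(w,r)|$ via Lemma~\ref{lem:volBergmanball} and \eqref{eqn:eqvltquan}, and invoke Lemma~\ref{lem:similar.Zhu 2.24} for the sub-mean-value estimate. If anything, you are slightly more careful than the paper about the domain of $T_\mu$, working first on the dense subspace and then extending by continuity.
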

\begin{proof}
Given $f\in A^2(\calU)$, Fubini's theorem gives 
\begin{align*}
\langle T_{\widehat{\mu}_r} f,f \rangle &= \int_{\calU} |f(z)|^2 \widehat{\mu}_r dV(z)\\
&= \int_{\calU} |f(z)|^2 \frac{\mu(D(z,r))}{|D(z,r)|} dV(z)\\
&= \int_{\calU} \frac{|f(z)|^2}{|D(z,r)|} dV(z) \int_{D(z,r)}d\mu(w)\\
&= \int_{\calU} d\mu(w) \int_{D(w,r)} \frac{|f(z)|^2}{|D(z,r)|} dV(z).
\end{align*}
By \eqref{eq:volBergmanball} and \eqref{eqn:eqvltquan}, we can see that $|D(z,r)|$ is comparable to $|D(w,r)|$ for all $w\in\calU$ and $z\in D(w,r)$. This together with Lemma \ref{lem:similar.Zhu 2.24} implies that there exists a positive constant $C$ such that 
\[
|f(w)|^2 \leq C \int_{D(w,r)}  \frac{|f(z)|^2}{|D(z,r)|} dV(z)
\]
for all $w\in\calU$. It follows that 
\[
\langle T_{\mu} f,f \rangle=\int_{\calU} |f(w)|^2 d\mu(w) \leq C \langle T_{\widehat{\mu}_r} f,f \rangle 
\]
for all $f\in A^2(\calU)$, completing the proof of the lemma.
\end{proof}

\begin{theorem}\label{thm:main2}
Suppose $p\geq 1$, $r>0$ and $\mu\in\mathcal{M}_+$. Then the following conditions are equivalent:
\begin{enumerate}
  \item[(i)] $T_{\mu}\in S_p$.
  \item[(ii)] $\widetilde{\mu}\in L^p(\calU,d\lambda)$.
  \item[(iii)] $\widehat{\mu}_r \in L^p(\calU,d\lambda)$.
  \item[(iv)] $\{\widehat{\mu}_r(a_k)\}\in l^p$ for every $r$-lattice $\{a_k\}$.
  \item[(v)] $\{\widehat{\mu}_r(a_k)\}\in l^p$ for some $r$-lattice $\{a_k\}$
\end{enumerate}
\end{theorem}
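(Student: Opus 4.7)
The plan is to take Theorem~\ref{thm:main5} for granted, which already supplies the equivalences (iii)~$\Leftrightarrow$~(iv)~$\Leftrightarrow$~(v), and then to close the cycle (iii)~$\Rightarrow$~(i)~$\Rightarrow$~(ii)~$\Rightarrow$~(iii). For (iii)~$\Rightarrow$~(i), I would first apply Lemma~\ref{lem:T_finSp} to $\widehat{\mu}_r \in L^p(\calU,d\lambda)$ to obtain $T_{\widehat{\mu}_r} \in S_p$; Lemma~\ref{lem:TuleqTaver} then provides the operator inequality $0 \leq T_\mu \leq C_r\, T_{\widehat{\mu}_r}$, and since $p \geq 1$ the standard Schatten domination $0 \leq A \leq B \Rightarrow \|A\|_p \leq \|B\|_p$ immediately forces $T_\mu \in S_p$.

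For (i)~$\Rightarrow$~(ii) I would exploit that $T_\mu \in S_p$ is both compact and positive, so that $\widetilde{T_\mu} = \widetilde{\mu}$ pointwise and $T_\mu^p \in S_1$ with $\mathrm{tr}(T_\mu^p) = \|T_\mu\|_p^p$. Since each $k_z$ is a unit vector in $A^2(\calU)$, property~(3) of Schatten classes listed in Section~\ref{sec:p>=1} yields
\[
\widetilde{\mu}(z)^p = \langle T_\mu k_z, k_z \rangle^p \leq \langle T_\mu^p k_z, k_z \rangle = \widetilde{T_\mu^p}(z).
\]
Integrating against $d\lambda$ and applying the trace formula \eqref{eq:trace} to the $S_1$ operator $T_\mu^p$ then gives $\int_\calU \widetilde{\mu}(z)^p\, d\lambda(z) \leq \mathrm{tr}(T_\mu^p) = \|T_\mu\|_p^p < \infty$, establishing (ii).

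For (ii)~$\Rightarrow$~(iii) I plan to prove the pointwise comparison $\widehat{\mu}_r(z) \leq C\, \widetilde{\mu}(z)$. Applying Lemma~\ref{blem} with $u = w \in D(z,r)$ and $v = z$ shows $|\bfrho(z,w)|$ is comparable to $\bfrho(z)$, whence $|k_z(w)|^2 = |K(z,w)|^2/K(z,z) \geq c\, \bfrho(z)^{-(n+1)}$ uniformly on $D(z,r)$. Integrating this lower bound against $d\mu$ over $D(z,r)$ and using \eqref{eq:volBergmanball} to re-express $\mu(D(z,r))/\bfrho(z)^{n+1}$ as a constant times $\widehat{\mu}_r(z)$ closes the cycle.

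The main obstacle is the step (i)~$\Rightarrow$~(ii): it is the only direction that genuinely uses the Hilbert-space structure of $S_p$, combining the Jensen-type inequality for positive operators with the trace-integral identity, and it also requires one to justify rewriting $\widetilde{T_\mu}$ as $\widetilde{\mu}$ (for which compactness of $T_\mu$, hence at least boundedness, is needed). By contrast, the remaining implications reduce either to operator domination or to an elementary pointwise weight comparison and should be routine.
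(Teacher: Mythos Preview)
Your proposal is correct and follows essentially the same route as the paper: the equivalences (iii)$\Leftrightarrow$(iv)$\Leftrightarrow$(v) via Theorem~\ref{thm:main5}, (iii)$\Rightarrow$(i) via Lemmas~\ref{lem:T_finSp} and~\ref{lem:TuleqTaver}, (i)$\Rightarrow$(ii) via the Jensen-type inequality for positive operators combined with the trace formula~\eqref{eq:trace}, and (ii)$\Rightarrow$(iii) via the pointwise bound $\widehat{\mu}_r\leq C\widetilde{\mu}$ obtained from Lemma~\ref{blem}. The only addition is that you spell out the Schatten monotonicity step $0\le A\le B\Rightarrow\|A\|_p\le\|B\|_p$ for $p\ge1$, which the paper uses implicitly.
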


\begin{proof}
(i) $\Rightarrow$ (ii). Assume $T_{\mu}\in S_p$. Then we have $T_{\mu}^p \in S_1$ and $\widetilde{T_{\mu}^p} \geq (\widetilde{T_{\mu}})^p$ (see \cite[Proposition 1.31]{Zhu07} for example). Combining the fact that $\widetilde{T_{\mu}}=\widetilde{\mu}$ with \eqref{eq:trace}, we have
\[
\int_{\calU} (\widetilde{\mu})^p d\lambda = \int_{\calU} (\widetilde{T_{\mu}})^p d\lambda \leq \int_{\calU} \widetilde{T_{\mu}^p} d\lambda = \mathrm{tr}(T_{\mu}^p) <\infty.
\]

(ii) $\Rightarrow$ (iii). This easily follows by 
\begin{equation}\label{eq:mu_rleqmu}
\widehat{\mu}_r(z)= C \bfrho(z)^{-n-1} \int_{D(z,r)} d\mu(w) \leq C \int_{D(z,r)} |k_z(w)|^2 d\mu(w) \leq C\widetilde{\mu}(z),
\end{equation}
where the expression is due to \eqref{eq:averaging} and \eqref{eqn:eqvltquan}.

(iii) $\Rightarrow$ (i). Assume $\widehat{\mu}_r \in L^p(\calU,d\lambda)$. Thus by Lemma \ref{lem:T_finSp}, $T_{\widehat{\mu}_r} \in S_p$. Then it immediately follows from Lemma \ref{lem:TuleqTaver} that $T_{\mu}\in S_p$. 

That implication (iii) $\Leftrightarrow$ (iv) $\Leftrightarrow$ (v) follows immediately from Theorem \ref{thm:main5}. The proof of the theorem is complete.
\end{proof}

\section{The case $0<p<1$: Part I}\label{sec:p<1P1}

In this section we describe our main result except the integral properties of Berezin transform $\widetilde{\mu}$ in the range of $0<p<1$, where the methods involved are adapted from \cite{Zhu07NY}. 
The key of the section is of characterization of the membership of $T_{\mu}$ in $S_p$ in terms of the averaging function $\widehat{\mu}_r$.
We begin with the following three lemmas, which could be tracked in \cite{Zhu07}.

\begin{lemma}\label{lem:Zhu07Pro1.30}
Suppose $A$ is a bounded surjective operator on $H$ and $T$ is any bounded linear operator on $H$. Then $T\in S_p$ if and only if $A^* T A\in S_p$.
\end{lemma}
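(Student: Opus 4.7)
The plan is to reduce the statement to the standard fact that $S_p$ is a two-sided ideal in $\mathcal{B}(H)$, meaning that if $T\in S_p$ and $R,S\in\mathcal{B}(H)$ then $RTS\in S_p$ with $\|RTS\|_p\leq \|R\|\,\|T\|_p\,\|S\|$; this ideal property can be cited directly from \cite{Zhu07}. Granted this, the forward implication $T\in S_p\Rightarrow A^{*}TA\in S_p$ is immediate, since $A$ and $A^{*}$ are bounded.

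For the nontrivial direction, the first step I would carry out is to manufacture a bounded right inverse for $A$, i.e.\ an operator $B\in\mathcal{B}(H)$ with $AB=I_H$. Since $A$ is bounded and surjective, the restriction $A|_{(\ker A)^{\perp}}\colon(\ker A)^{\perp}\to H$ is a continuous bijection, so the bounded inverse theorem (an application of the open mapping theorem) supplies a bounded inverse $C\colon H\to(\ker A)^{\perp}$; composing with the inclusion $(\ker A)^{\perp}\hookrightarrow H$ gives the desired $B\in\mathcal{B}(H)$ satisfying $AB=I_H$, and taking adjoints yields $B^{*}A^{*}=I_H$ as well.

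With $B$ in hand, the converse is a one-line algebraic manipulation:
\[
T \;=\; I_H\, T\, I_H \;=\; (AB)^{*}\,T\,(AB) \;=\; B^{*}\,(A^{*}TA)\,B.
\]
If $A^{*}TA\in S_p$, the ideal property applied to the bounded operators $B^{*}$ and $B$ then yields $T\in S_p$, completing the proof.

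The only mild subtlety is the invocation of the open mapping theorem to produce $B$; beyond this, the argument is purely formal and I do not anticipate any real obstacle. In particular, no property specific to the Bergman space $A^{2}(\calU)$ or to Toeplitz operators enters — the lemma is a statement about abstract Schatten classes on a separable Hilbert space.
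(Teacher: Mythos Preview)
Your argument is correct. The paper does not actually prove this lemma; it merely records it (together with the two lemmas that follow) as a fact ``which could be tracked in \cite{Zhu07}'', i.e.\ it is quoted from Zhu's book as Proposition~1.30 there, so there is no in-paper proof to compare against. Your reduction to the two-sided ideal property of $S_p$ together with the construction of a bounded right inverse via the open mapping theorem is exactly the standard proof of this proposition, and nothing specific to the Bergman-space setting is needed.
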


\begin{lemma}\label{lem:Zhu07Pro1.29}
Suppose $0<p\leq 2$ and $T$ is a compact operator on $H$. Then
\[
\|T\|_p^p \leq \sum_{i=1}^{\infty} \sum_{j=1}^{\infty} \left| \langle Te_i,e_j \rangle \right|^p
\]
for any orthonormal basis $\{e_k\}$ of $H$.
\end{lemma}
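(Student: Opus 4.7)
The plan is to combine a $p$-quasi-triangle inequality for the Schatten norm (valid in the range $0<p\leq 1$) with a Riesz--Thorin interpolation argument (to handle $1<p\leq 2$). To set things up, write $a_{ij}:=\langle Te_i,e_j\rangle$ and decompose $T$ in the given basis as $T=\sum_{i,j}a_{ij}\,P_{ij}$, where $P_{ij}$ is the rank-one operator $x\mapsto \langle x,e_i\rangle\, e_j$. Each $P_{ij}$ has a single nonzero singular value equal to $1$, so $\|P_{ij}\|_p=1$ for every $p>0$.

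For $0<p\leq 1$, I would invoke the well-known $p$-quasi-triangle inequality $\|A+B\|_p^p\leq \|A\|_p^p+\|B\|_p^p$ for $S_p$, which is a standard consequence of the Ky Fan/Rotfel'd weak majorization $s_{i+j-1}(A+B)\leq s_i(A)+s_j(B)$ for singular values. Iterating this over finite partial sums and passing to the limit (using the compactness of $T$ to control the tail in $S_p$) gives
\[
\|T\|_p^p \;\leq\; \sum_{i,j}\|a_{ij}P_{ij}\|_p^p \;=\; \sum_{i,j}|a_{ij}|^p,
\]
which is the claim in this range.

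For the remaining range $1<p\leq 2$ the $p$-quasi-triangle inequality fails, and I would instead rely on complex interpolation. Consider the linear map $\Phi\colon(a_{ij})\mapsto\sum_{i,j}a_{ij}\,P_{ij}$. The case $p=1$ just established shows that $\Phi\colon \ell^1(\bbN^2)\to S_1$ has norm at most $1$, while the Hilbert--Schmidt identity $\|T\|_2^2=\sum_{i,j}|a_{ij}|^2$ shows that $\Phi\colon \ell^2(\bbN^2)\to S_2$ is an isometry. The Riesz--Thorin interpolation theorem for Schatten classes (cf.\ \cite[Section 2.2]{Zhu07}, the same tool used earlier in the proof of Lemma \ref{lem:T_finSp}) then yields $\|\Phi\|_{\ell^p\to S_p}\leq 1$ for all $1\leq p\leq 2$, which is precisely the asserted inequality.

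The principal technical point is the $p$-quasi-triangle inequality used in the range $0<p<1$: its usual derivation rests on the nontrivial Ky Fan/Rotfel'd singular-value majorization rather than on an elementary manipulation. All other ingredients---the Hilbert--Schmidt identity and complex interpolation between $S_1$ and $S_2$---are standard, so one could alternatively take the quasi-triangle inequality as a black box from Zhu's monograph, which is the route the paper takes in labelling this lemma.
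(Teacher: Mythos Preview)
The paper does not supply a proof of this lemma; it merely states that the result ``could be tracked in \cite{Zhu07}''. Your argument is a valid, self-contained proof and is essentially the standard one: the rank-one decomposition plus the $p$-quasi-triangle inequality for $0<p\le 1$, and interpolation between the trace-class and Hilbert--Schmidt endpoints for $1\le p\le 2$.

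One small clarification is needed in the limiting step for $0<p\le 1$. Compactness of $T$ by itself does not make the tail small in $S_p$; what you should say is that if the right-hand side $\sum_{i,j}|a_{ij}|^p$ is infinite the inequality is vacuous, while if it is finite then the finite partial sums $T_N=Q_NTQ_N$ (with $Q_N$ the orthogonal projection onto $\mathrm{span}\{e_1,\dots,e_N\}$) form a Cauchy sequence in $S_p$ by the quasi-triangle inequality. Since $\|\cdot\|\le\|\cdot\|_p$, the $S_p$-limit coincides with the operator-norm limit, and compactness of $T$ gives $Q_NTQ_N\to T$ in operator norm, so the limit is $T$. With that adjustment the argument is complete.
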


\begin{lemma}\label{lem:Zhu07Cor1.32}
Suppose $T$ is a positive compact operator on $H$ and $\{e_k\}$ is any orthonormal basis of $H$. If $0<p\leq 1$ and 
\[
\sum_{k=1}^{\infty} \langle Te_k,e_k \rangle^p <\infty,
\] 
then $T$ belongs to $S_p$.
\end{lemma}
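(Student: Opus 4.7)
The plan is to reduce the claim to the scalar Jensen inequality applied to the concave function $x \mapsto x^p$, by exploiting the fact that a positive compact operator has a genuine spectral decomposition. The target is to prove $\|T\|_p^p = \mathrm{tr}(T^p) < \infty$, which, since $T^p \in S_1$ is equivalent to $T \in S_p$, can be checked by summing the diagonal entries of $T^p$ in any orthonormal basis.

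First, I would invoke the spectral theorem for positive compact operators to write
\[
T = \sum_j \lambda_j \langle \cdot, f_j\rangle f_j,
\]
where $\{f_j\}$ is an orthonormal set of eigenvectors and $\lambda_j \geq 0$. Then for any unit vector $e_k$ from the given basis, set $p_{k,j} := |\langle e_k, f_j\rangle|^2$; since $\{f_j\}$ (extended if necessary to a basis, with $\lambda_j = 0$ on the extension) is orthonormal, $\sum_j p_{k,j} \leq 1$. Expanding gives
\[
\langle T e_k, e_k\rangle = \sum_j p_{k,j}\, \lambda_j,
\qquad
\langle T^p e_k, e_k\rangle = \sum_j p_{k,j}\, \lambda_j^p.
\]

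Next, the key estimate is a pointwise comparison $\langle T^p e_k, e_k\rangle \leq \langle T e_k, e_k\rangle^p$. Since $0 < p \leq 1$, the scalar function $x \mapsto x^p$ is concave on $[0,\infty)$, and Jensen's inequality with the subprobability weights $p_{k,j}$ (together with $p^p \leq p$ when $0 \leq p \leq 1$, applied to $\sum_j p_{k,j}$ if this total is $<1$, or the standard form if the total is $1$) yields
\[
\sum_j p_{k,j}\, \lambda_j^p \;\leq\; \Bigl(\sum_j p_{k,j}\, \lambda_j\Bigr)^p,
\]
which is precisely the desired inequality. Summing this over $k$ and using that $\mathrm{tr}(T^p) = \sum_k \langle T^p e_k, e_k\rangle$ (independent of the choice of orthonormal basis) gives
\[
\|T\|_p^p = \mathrm{tr}(T^p) \leq \sum_{k=1}^{\infty} \langle T e_k, e_k\rangle^p < \infty,
\]
so $T^p \in S_1$ and therefore $T \in S_p$.

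The argument has essentially no obstacles; the only point requiring a moment's care is handling the case where $\{f_j\}$ does not span $H$ (i.e., $T$ has a nontrivial kernel), which is harmless since the missing weight only lowers the left-hand side. In fact the cleanest route is to extend $\{f_j\}$ to a full basis and assign $\lambda_j = 0$ on the kernel, after which the weights sum to exactly $1$ and scalar Jensen applies verbatim.
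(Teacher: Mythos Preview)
The paper does not actually prove this lemma; it is one of three auxiliary statements that the paper simply quotes from \cite{Zhu07} (``the following three lemmas, which could be tracked in \cite{Zhu07}''), so there is no in-text argument to compare against. Your proof is correct and is essentially the standard one: the inequality $\langle T^p e_k,e_k\rangle\le\langle Te_k,e_k\rangle^p$ for $0<p\le1$ is exactly the reverse of the $p\ge1$ inequality the paper lists as fact~(3) in Section~\ref{sec:p>=1}, and your Jensen derivation via the spectral decomposition is the usual way to obtain it. Summing over $k$ and using facts~(1) and~(2) from that same list then gives $\mathrm{tr}(T^p)<\infty$, hence $T\in S_p$. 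One cosmetic remark: in your aside about the subprobability case you overload the letter $p$ (writing ``$p^p\le p$'' for the total weight), which is momentarily confusing; your suggested fix of extending $\{f_j\}$ to a full basis with zero eigenvalue on the kernel is indeed the clean way to state it.
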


\begin{lemma}
Suppose $0<p<1$, $r>0$ and $\mu\in\mathcal{M}_+$. If there exists an $r$-lattice $\{a_k\}$ such that $\{\widehat{\mu}_r(a_k)\}\in l^p$, then $T_{\mu}\in S_p$.
\end{lemma}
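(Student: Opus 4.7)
The plan is to decompose $\mu$ atomically along the $r$-lattice and apply the quasi-triangle inequality $\|A+B\|_p^p \leq \|A\|_p^p + \|B\|_p^p$, which holds on Schatten classes when $0 < p \leq 1$. Fix a Borel partition $\{E_k\}$ of $\calU$ satisfying $a_k \in E_k \subset D(a_k, r)$ (for instance $E_1 = D(a_1, r)$ and $E_k = D(a_k, r) \setminus \bigcup_{j < k} E_j$), set $\mu_k = \mu|_{E_k}$, so $\mu = \sum_k \mu_k$ and $T_\mu = \sum_k T_{\mu_k}$ as positive operators on $A^2(\calU)$. The quasi-triangle inequality then yields
\[
\|T_\mu\|_p^p \;\leq\; \sum_k \|T_{\mu_k}\|_p^p,
\]
and the entire problem reduces to the per-ball estimate
\[
\|T_{\mu_k}\|_p^p \;\leq\; C\,\widehat{\mu}_r(a_k)^p, \qquad (\star)
\]
uniformly in $k$; once $(\star)$ is established, the hypothesis $\{\widehat{\mu}_r(a_k)\} \in l^p$ gives $T_\mu \in S_p$.

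For $(\star)$ I factor $T_{\mu_k} = B_k^* B_k$, where $B_k : A^2(\calU) \to L^2(\mu_k)$ is the restriction $B_k f = f|_{E_k}$. The nonzero spectrum of $T_{\mu_k}$ coincides with that of $B_k B_k^*$, an integral operator on $L^2(\mu_k)$ with kernel $K(w, w')|_{E_k \times E_k}$. By Lemma \ref{blem}, this kernel is pinched between positive multiples of $K(a_k, a_k) \asymp \bfrho(a_k)^{-(n+1)}$ on $E_k \times E_k$; consequently both the trace and operator norms of $B_k B_k^*$ are controlled by $K(a_k, a_k)\,\mu(E_k)$, which, via Lemma \ref{lem:volBergmanball}, equals a constant multiple of $\mu(D(a_k,r))/|D(a_k, r)| = \widehat{\mu}_r(a_k)$ up to the factor coming from $\mu(E_k) \leq \mu(D(a_k, r))$. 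Thus $\|T_{\mu_k}\|_1,\,\|T_{\mu_k}\|_\infty \leq C\,\widehat{\mu}_r(a_k)$.

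The main obstacle is upgrading these trace and operator-norm bounds to the $S_p$-quasinorm when $p < 1$, where the Schatten norm is acutely sensitive to how the spectrum of $T_{\mu_k}$ spreads away from the dominant eigenvalue; a naive rank-one domination $T_{\mu_k} \leq C\,\widehat{\mu}_r(a_k)\, k_{a_k} \otimes k_{a_k}$ fails, because $f \in A^2(\calU)$ may vanish at $a_k$ yet still contribute to $\int_{E_k}|f|^2\,d\mu$. The route I would take invokes Lemma \ref{lem:Zhu07Pro1.29} for $T_{\mu_k}$: for any orthonormal basis $\{e_j\}$ of $A^2(\calU)$, $\|T_{\mu_k}\|_p^p \leq \sum_{i,j}\bigl|\int_{E_k} e_i\,\overline{e_j}\,d\mu\bigr|^p$; each matrix entry is then bounded by Cauchy--Schwarz together with the sub-mean-value inequality (Lemma \ref{lem:similar.Zhu 2.24}) at radius $2r$, giving the pointwise control $\bigl|\int_{E_k} e_i\,\overline{e_j}\,d\mu\bigr| \leq C\,\widehat{\mu}_r(a_k)\,\|e_i\chi_{D(a_k, 2r)}\|_2 \|e_j\chi_{D(a_k, 2r)}\|_2$. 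The delicate step is summing the resulting double series: the reproducing identity $\sum_j|e_j(w)|^2 = K(w, w)$ caps $\sum_j \|e_j \chi_{D(a_k, 2r)}\|_2^2$ uniformly by $\lambda(D(a_k, 2r)) \leq C$ via \eqref{eq:lambdaBball}, but ensuring that the $(p/2)$-th powers of these masses also sum to a constant requires a carefully chosen orthonormal basis with sufficient spatial localization, and this choice, together with the associated counting argument in the spirit of Lemma \ref{lem:decomposition}, is where the proof demands the most care.
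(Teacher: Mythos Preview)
Your strategy---atomizing $\mu$ over the lattice and invoking the $S_p$ quasi-triangle inequality---is genuinely different from the paper's, but it stalls exactly where you say it does, and the resolution you sketch does not work. The per-ball estimate $(\star)$ is not a formal consequence of the trace and operator-norm bounds you derive: for $p<1$ one needs control on how the eigenvalues of $T_{\mu_k}$ spread, and your proposed route through Lemma~\ref{lem:Zhu07Pro1.29} requires an orthonormal basis $\{e_j\}$ of $A^2(\calU)$ for which $\sum_j \|e_j\chi_{D(a_k,2r)}\|_2^{p}$ is bounded uniformly in $k$. Since holomorphic functions cannot be compactly supported, every $e_j$ contributes to every ball, and the only a priori control is $\sum_j \|e_j\chi_{D(a_k,2r)}\|_2^{2}\le C$; passing from exponent $2$ to exponent $p<2$ would itself require nontrivial decay of these local masses, which amounts to constructing a ``localized'' holomorphic ONB---a result you have not supplied and which is not available in the paper. (A correct salvage of $(\star)$ would instead exploit that the restricted kernel $K(\cdot,\cdot)$ is real-analytic on $\overline{D(a_k,r)}\times\overline{D(a_k,r)}$, yielding finite-rank approximations with uniformly fast error decay and hence uniform singular-value bounds; but that is a different argument altogether.)

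The paper avoids this difficulty completely. Rather than bounding each $\|T_{\mu_k}\|_p$ separately, it uses the Coifman--Rochberg atomic decomposition to build a bounded \emph{surjective} operator $A$ sending an orthonormal basis $\{e_k\}$ to the atoms $h_k(z)=\bfrho(a_k)^{b-(n+1)/2}\bfrho(z,a_k)^{-b}$. By Lemma~\ref{lem:Zhu07Pro1.30}, $T_\mu\in S_p$ iff $A^*T_\mu A\in S_p$, and for the latter one applies Lemma~\ref{lem:Zhu07Cor1.32} to the diagonal $\langle A^*T_\mu A\,e_k,e_k\rangle=\int_{\calU}|h_k|^2\,d\mu$. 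Because the atoms $h_k$ are already localized near $a_k$, these diagonal entries are directly controlled by $\sum_j\widehat{\mu}_r(a_j)$ weighted by the rapidly decaying $|h_k(a_j)|^{2}$, and the sum $\sum_k\langle T e_k,e_k\rangle^p$ closes using only \eqref{eqn:keylem}. In short, the atomic decomposition supplies exactly the ``localized frame'' you were reaching for, and the surjectivity trick lets one use it without ever needing an orthonormal basis.
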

\begin{proof}
Suppose $\{a_k\}$ is an $r$-lattice such that $\{\widehat{\mu}_r(a_k)\}\in l^p$.
First note from \cite{LS20} that $T_{\mu}$ is compact on $A^2(\calU)$. We want to show that $T_{\mu}$ is in $S_p$. To this end, fix a sufficiently large number $b$ and by \cite[Theorem 2]{CR80} (the atomic decomposition of functions in Bergman spaces on symmetric Siegel domains of type two), we may assume that $A^2(\calU)$ consists exactly of functions of the form 
\[
f(z)=\sum_{k=1}^{\infty} c_k h_k(z),
\] 
where $\{c_k\}\in l^2$, 
\[
h_k(z)= \frac{\bfrho(a_k)^{b-(n+1)/2}}{\bfrho(z,a_k)^b},
\]
and 
\[
\int_{\calU} |f(z)|^2 dV(z) \leq C \sum_{k=1}^{\infty} |c_k|^2
\]
for some positive constant $C$ independent of $\{c_k\}$.

Fix an orthonormal basis $\{e_k\}$ for $A^2(\calU)$ and define an operator $A$ on $A^2(\calU)$ by 
\begin{equation}\label{eq:auxioper}
A \left( \sum_{k=1}^{\infty} c_k e_k \right) = \sum_{k=1}^{\infty} c_k h_k.
\end{equation}
By the statements of above paragraph, we can see that $A$ is a bounded surjective operator on $A^2(\calU)$. Applying Lemma \ref{lem:Zhu07Pro1.30}, the Toeplitz operator $T_{\mu}$ will be in $S_p$ if we can show that the operator $T=A^* T_{\mu} A$ belongs to $S_p$. To show that $T\in S_p$, according to Lemma \ref{lem:Zhu07Cor1.32}, we just need to verify that 
\[
M= \sum_{k=1}^{\infty} \langle Te_k,e_k \rangle^p <\infty.
\]

First we note that 
\[
\langle Te_k,e_k \rangle= \langle  T_{\mu} h_k, h_k \rangle = \int_{\calU} |h_k(z)|^2 d\mu(z) \leq \sum_{j=1}^{\infty} \int_{D(a_j,r)} |h_k(z)|^2 d\mu(z).
\]
By \eqref{eqn:eqvltquan} and \eqref{eq:averaging}, there is a positive constant $C$ such that 
\[
\langle Te_k,e_k \rangle \leq C \sum_{j=1}^{\infty} |h_k(a_j)|^2 \bfrho(a_j)^{n+1} \widehat{\mu}_r(a_j).
\]
Since $0<p<1$, an application of H\"oder's inequality gives 
\[
\langle Te_k,e_k \rangle^p \leq C \sum_{j=1}^{\infty} |h_k(a_j)|^{2p}\bfrho(a_j)^{p(n+1)} \widehat{\mu}_r(a_j)^p.
\]
Thus by Fubini's theorem, we obtain
\[
M \leq C \sum_{j=1}^{\infty} \widehat{\mu}_r(a_j)^p  \bfrho(a_j)^{p(n+1)} \sum_{k=1}^{\infty} |h_k(a_j)|^{2p}.
\]

For each $j$ we consider the sum
\[
M_j = \sum_{k=1}^{\infty} |h_k(a_j)|^{2p} = \sum_{k=1}^{\infty} \frac{\bfrho(a_k)^{p(2b-n-1)}}{|\bfrho(a_j,a_k)|^{2pb}}.
\]
By Lemma \ref{lem:similar.Zhu 2.24}, there exists a positive constant $C$ such that 
\[
\frac{1}{|\bfrho(a_j,a_k)|^{2pb}} \leq  \frac{C}{\bfrho(a_k)^{n+1}} \int_{D(a_k,r)} \frac{dV(z)}{|\bfrho(a_j,z)|^{2pb}}
\]
for all $j$ and all $k$. Since $\bfrho(a_k)$ is comparable to $\bfrho(z)$ for $z\in D(a_k,r)$, we have
\begin{align*}
M_j &\leq C \sum_{k=1}^{\infty} \int_{D(a_k,r)} \frac{\bfrho(z)^{p(2b-n-1)-n-1}}{|\bfrho(a_j,z)|^{2pb}} dV(z)\\
&\leq CN \int_{\calU} \frac{\bfrho(z)^{p(2b-n-1)-n-1}}{|\bfrho(a_j,z)|^{2pb}} dV(z),
\end{align*}
where $N$ is as in Lemma \ref{lem:decomposition}. We can assume that $b$ is large enough so that $p(2b-n-1)>n$, then applying \eqref{eqn:keylem}, there exists a positive constant $C$ such that 
\[
M_j \leq C \bfrho(a_j)^{-p(n+1)}
\]
for all $j$. Hence, it follows that 
\[
M\leq C \sum_{j=1}^{\infty} \widehat{\mu}_r(a_j)^p <\infty.
\]
This completes the proof of the lemma.
\end{proof}

\begin{lemma}
Suppose $0<p<1$, $r>0$ and $\mu\in\mathcal{M}_+$. If $T_{\mu}\in S_p$ and $\{a_k\}$ is an $r$-lattice, then $\{\widehat{\mu}_r(a_k)\}\in l^p$.
\end{lemma}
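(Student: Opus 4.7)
The plan is to run the atomic-decomposition argument of the previous lemma in reverse, reducing the claim to a matrix estimate on $\ell^2$. The crucial obstacle is that for $0<p<1$ the inequality $\sum_k\langle Te_k,e_k\rangle^p \leq \|T\|_p^p$ \emph{fails} for a generic positive operator $T\in S_p$, so we must first arrange the lattice to be so sparse that the resulting auxiliary matrix is sufficiently close to diagonal in the $S_p$-sense.

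By Lemma \ref{lem:decomposition}(ii), for any $R>0$ one can partition $\{a_k\}$ into finitely many $R$-separated subsequences, and since $\ell^p$-summability is stable under finite unions we may assume the lattice itself is $R$-separated with $R$ to be fixed later. Exactly as in the previous lemma, fix $b$ sufficiently large, set $h_k(z)=\bfrho(a_k)^{b-(n+1)/2}/\bfrho(z,a_k)^b$, and define $A\colon\ell^2\to A^2(\calU)$ by $Ae_k=h_k$; by \cite{CR80} this $A$ is bounded and surjective, so Lemma \ref{lem:Zhu07Pro1.30} gives $T:=A^*T_\mu A\in S_p$, with matrix entries $T_{jk}=\int h_k\overline{h_j}\,d\mu$. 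Inequality \eqref{eqn:eqvltquan} implies $|h_k(z)|^2\asymp\bfrho(a_k)^{-(n+1)}$ uniformly on $D(a_k,r)$, whence
\[
T_{kk}=\int|h_k|^2\,d\mu \;\geq\; \int_{D(a_k,r)}|h_k|^2\,d\mu \;\geq\; c\,\widehat{\mu}_r(a_k).
\]
It therefore suffices to prove $\sum_k T_{kk}^p\leq C\|T\|_p^p$.

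To do this, split $T=D+E$ with $D=\mathrm{diag}(T_{kk})$. The $p$-triangle inequality for Schatten quasi-norms (valid for $0<p\leq 1$) gives $\|D\|_p^p\leq\|T\|_p^p+\|E\|_p^p$, while Lemma \ref{lem:Zhu07Pro1.29} gives $\|E\|_p^p\leq\sum_{j\neq k}|T_{jk}|^p$. For $j\neq k$ I would estimate $|T_{jk}|\leq\int|h_kh_j|\,d\mu$, partition $\calU=\bigcup_l D(a_l,r)$ (with finite overlap, by Lemma \ref{lem:decomposition}(i)), and use \eqref{eqn:eqvltquan} on each $D(a_l,r)$ to replace $|\bfrho(z,a_k)|$ and $|\bfrho(z,a_j)|$ by $|\bfrho(a_l,a_k)|$ and $|\bfrho(a_l,a_j)|$, thereby reducing $|T_{jk}|$ to a sum over $l$ of $\widehat{\mu}_r(a_l)$ weighted by explicit kernel factors. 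Applying the $p$-subadditivity $(\sum_l x_l)^p\leq\sum_l x_l^p$, summing over $j\neq k$, and estimating the resulting discrete kernel sums via the discrete analogue of \eqref{eqn:keylem} (for which $b$ must be chosen large enough to ensure absolute convergence), one should obtain
\[
\sum_{j\neq k}|T_{jk}|^p \;\leq\; \epsilon(R)\sum_l T_{ll}^p
\]
with $\epsilon(R)\to 0$ as $R\to\infty$---the decay coming from the fact that $R$-separation forces $|\bfrho(a_l,a_k)|\geq\cosh(R)\sqrt{\bfrho(a_l)\bfrho(a_k)}$ for $l\neq k$, a direct consequence of the definition of $\beta$ in Section 2. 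Choosing $R$ with $\epsilon(R)<1/2$ then yields $\|D\|_p^p\leq 2\|T\|_p^p$, and combined with the diagonal lower bound this delivers $\sum_k\widehat{\mu}_r(a_k)^p\leq C\|T_\mu\|_p^p<\infty$. The technical heart of the argument is clearly this off-diagonal decay estimate; everything else is a reorganization of tools already developed in the previous lemma and in Section 2.
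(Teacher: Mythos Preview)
Your overall architecture---reduce to an $R$-separated subsequence, build the auxiliary operator $T=A^*T_\mu A$, split $T=D+E$, and absorb $\|E\|_p^p$ into $\|D\|_p^p$ via $R$-separation---is exactly the paper's strategy. But there is a genuine gap at the off-diagonal step, and it is precisely the place where the paper introduces an ingredient you are missing.

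After your reduction, the sequence $\{a_k\}$ is $R$-separated with $R>2r$, so the balls $D(a_k,r)$ are pairwise disjoint and \emph{do not cover} $\calU$. Your sentence ``partition $\calU=\bigcup_l D(a_l,r)$'' is therefore false, and the estimate $|T_{jk}|\le\int_\calU |h_jh_k|\,d\mu\le\sum_l\int_{D(a_l,r)}|h_jh_k|\,d\mu$ drops the mass of $\mu$ on $\calU\setminus\bigcup_l D(a_l,r)$, over which you have no control. If instead you cover $\calU$ with an honest $r$-lattice $\{b_m\}$, the bound you obtain is $\|E\|_p^p\le C\,\epsilon(R)\sum_m\widehat{\mu}_r(b_m)^p$, with the sum running over the \emph{full} lattice rather than over your sparse subsequence. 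This is not $\epsilon(R)\sum_l T_{ll}^p$, and the argument becomes circular: you are bounding the quantity you want to prove finite by a constant times itself (summing over the $m(R)$ subsequences reintroduces a factor $m(R)$ that competes with $\epsilon(R)$).

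The paper's device is to replace $\mu$ by the truncated measure $\nu=\sum_k\chi_{D(\zeta_k,r)}\,\mu$ \emph{before} forming the auxiliary operator. Since $0\le\nu\le\mu$ one has $T_\nu\in S_p$ with $\|T_\nu\|_p\le\|T_\mu\|_p$; the diagonal entries are unchanged because $\nu=\mu$ on each $D(\zeta_k,r)$; and now $\int|h_jh_k|\,d\nu$ is automatically supported on $\bigcup_l D(\zeta_l,r)$, so the decomposition over the sparse subsequence is legitimate and yields $\|E\|_p^p\le C_2 C_R\sum_l\widehat{\mu}_r(\zeta_l)^p$ with the \emph{same} index set as $\|D\|_p^p$. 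That is what lets the loop close for a single subsequence, independently of how many subsequences the partition produces. Inserting this truncation at the outset, your argument goes through essentially verbatim.
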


\begin{proof}
Fix a sufficiently large positive number $R$. Lemma \ref{lem:decomposition} tells us that
there is a decomposition of $\{a_k\}$ into $m$ subsequences $\{\Gamma_i\}$ such that for every pair $u,v\in\Gamma_i$ with $u\neq v$, $\beta(u,v)>R$. Let $\{\zeta_j\}$ be some $\Gamma_i$ and define a measure $\nu$ as follows:
\[
d\nu(z)=\sum_{k=1}^{\infty} \chi_k(z) d\mu(z),
\]
where $\chi_k$ is the characteristic function of $D(\zeta_k,r)$. Assume that $R>2r$, then the Bergman metric balls $\{D(\zeta_k,r)\}$ are disjoint.
Also, note that $0\leq\nu\leq\mu$, we have $\nu\in\mathcal{M}_+$ and $T_{\nu}\in S_p$ with 
$\|T_{\nu}\|_p \leq \|T_{\mu}\|_p$. 

Fix an orthonormal basis $\{e_k\}$ for $A^2(\calU)$. Similar to \eqref{eq:auxioper}, we define an auxiliary bounded operator 
\[
A \left( \sum_{k=1}^{\infty} c_k e_k \right) = \sum_{k=1}^{\infty} c_k h_k,
\]
where 
\[
h_k(z)= \frac{\bfrho(\zeta_k)^{b-(n+1)/2}}{\bfrho(z,\zeta_k)^b}
\]
and $b$ is sufficiently large.

Put $T=A^* T_{\nu}  A$. Since $A$ is bounded and $T_{\nu}\in S_p$, we have $T\in S_p$ with 
$\|T\|_p\leq \|A\|^2 \|T_{\nu}\|_p$. Hence, there exists a positive constant $C$ such that 
\begin{equation}\label{eq:tria_T}
\|T\|_p^p\leq C\|T_{\mu}\|_p^p.
\end{equation}

We split the operator $T$ as $T=D+E$, where $D$ is the diagonal operator defined by
\[
Df = \sum_{k=1}^{\infty} \langle Te_k,e_k \rangle \langle f,e_k \rangle e_k, \quad f\in A^2(\calU),
\]
and $E=T-D$. 

Note that $D$ is compact and positive, we have
\begin{align*}
\|D\|_p^p &= \sum_{k=1}^{\infty} \langle Te_k,e_k \rangle^p 
= \sum_{k=1}^{\infty} \langle T_{\nu}h_k,h_k \rangle^p \\
&=\sum_{k=1}^{\infty} \left[ \int_{\calU} |h_k(z)|^2 d\nu(z) \right]^p
\geq \sum_{k=1}^{\infty} \left[ \int_{D(\zeta_k,r)} |h_k(z)|^2 d\nu(z) \right]^p \\
&\geq C \sum_{k=1}^{\infty} \widehat{\nu}_r(\zeta_k)^p,
\end{align*}
where the last inequality follows by \eqref{eq:averaging} and \eqref{eqn:eqvltquan}. Since $\nu=\mu$ on each $D(\zeta_k,r)$, we obtain
\begin{equation}\label{eq:lownormD}
\|D\|_p^p \geq C_1 \sum_{k=1}^{\infty} \widehat{\mu}_r(\zeta_k)^p.
\end{equation}

On the other hand, by Lemma \ref{lem:Zhu07Pro1.29} we have 
\begin{align*}
\|E\|_p^p & \leq \sum_{i=1}^{\infty} \sum_{j=1}^{\infty} \left| \langle Ee_i,e_j \rangle \right|^p 
=  \sum_{j\neq k} \left| \langle T_{\nu}h_j,h_k \rangle \right|^p \\
& = \sum_{j\neq k} \left| \int_{\calU} h_j(z) \overline{h_k(z)} d\nu(z) \right|^p \\
& \leq \sum_{j\neq k} \left[ \sum_{i=1}^{\infty}  \int_{D(\zeta_i,r)} |h_j(z) h_k(z)| d\mu(z) \right]^p.
\end{align*}
Again by \eqref{eq:averaging} and \eqref{eqn:eqvltquan}, there exists a positive $C$ such that 
\[
\int_{D(\zeta_i,r)} |h_j(z) h_k(z)| d\mu(z) 
\leq C \bfrho(\zeta_i)^{n+1} |h_j(\zeta_i) h_k(\zeta_i)| \widehat{\mu}_r (\zeta_i)
\]
for all $i$.
Since $0<p<1$, an application of H\"older's inequality gives
\[
\|E\|_p^p \leq C \sum_{j\neq k}
\sum_{i=1}^{\infty} \bfrho(\zeta_i)^{p(n+1)} |h_j(\zeta_i) h_k(\zeta_i)|^p \widehat{\mu}_r (\zeta_i)^p.
\]
Then using Fubini's theorem, we obtain
\[
\|E\|_p^p \leq C \sum_{i=1}^{\infty} \bfrho(\zeta_i)^{p(n+1)} \widehat{\mu}_r (\zeta_i)^p I_i,
\]
where 
\[
I_i = \sum_{j\neq k} |h_j(\zeta_i) h_k(\zeta_i)|^p.
\]
Since every $|h_j(\zeta_i)|^p$ is comparable to 
\[
\int_{D(\zeta_j,r)} \frac{\bfrho(z)^{pb-p(n+1)/2}}{|\bfrho(\zeta_i,z)|^{pb}} d\lambda(z)
\]
by \eqref{eq:volBergmanball} and \eqref{eqn:eqvltquan}, and since 
$\Omega=\bigcup_{j\neq k} D(\zeta_j,r) \times D(\zeta_k,r)$
is a disjoint union, we can find a positive constant $C$ such that
\[
I_{i} \leq C \iint_{\Omega} \frac{[\bfrho(z) \bfrho(w)]^{pb-p(n+1)/2}}{[|\bfrho(\zeta_i,z)| |\bfrho(\zeta_i,w)]^{pb}} d\lambda(z)d\lambda(w).
\]
By assumption, we have 
\[
\Omega \subset G_R = \{(z,w)\in \calU \times \calU : \beta(z,w)\geq R-2r\}.
\]
Hence, 
\[
I_{i} \leq C \iint_{G_R} \frac{[\bfrho(z) \bfrho(w)]^{pb-p(n+1)/2}}{[|\bfrho(\zeta_i,z)| |\bfrho(\zeta_i,w)]^{pb}} d\lambda(z)d\lambda(w).
\]
Making the change of $z=\sigma_{\zeta_i}^{-1}(u)$ and $w=\sigma_{\zeta_i}^{-1}(v)$, by \eqref{eqn:-rho(sigma_a)} we obtain  
\[
I_i \leq C  \bfrho(\zeta_i)^{-p(n+1)} \iint_{G_R} \frac{[\bfrho(u) \bfrho(v)]^{pb-p(n+1)/2}}{[|\bfrho(\bfi,u)| |\bfrho(\bfi,v)]^{pb}} d\lambda(u)d\lambda(v).
\]
Therefore, there is a positive constant $C_2$ such that 
\begin{equation}\label{eq:upnormE}
\|E\|_p^p \leq C_2 C_R \sum_{i=1}^{\infty} \widehat{\mu}_r (\zeta_i)^p,
\end{equation}
where 
\[
C_R = \iint_{G_R} \frac{[\bfrho(u) \bfrho(v)]^{pb-p(n+1)/2-n-1}}{[|\bfrho(\bfi,u)| |\bfrho(\bfi,v)]^{pb}} dV(u)dV(v).
\]
We can assume that $b$ is large enough so that $pb>p(n+1)/2+n$, thus it follows by \eqref{eqn:keylem} that 
\[
\int_{\calU} \frac{\bfrho(u)^{pb-p(n+1)/2-n-1}}{|\bfrho(\bfi,u)|^{pb}} dV(u)<\infty.
\]
Consequently, it follows that $C_R \to 0$ as $R\to\infty$.

Finally, by the triangle inequality, we know that
\[
\|T\|_p^p\geq \|D\|_p^p - \|T\|_p^p.
\]
In view of \eqref{eq:lownormD} and \eqref{eq:upnormE}, we have
\[
\|T\|_p^p\geq (C_1-C_2 C_R) \sum_{i=1}^{\infty} \widehat{\mu}_r (\zeta_i)^p.
\]
It is clear that $C_1$ and $C_2$ are independent of $R$. We can chose $R$ large enough so that $C_1-C_2 C_R>0$. Together with \eqref{eq:tria_T}, it follows that there exists a positive constant $C$ such that
\[
\sum_{i=1}^{\infty} \widehat{\mu}_r (\zeta_i)^p \leq C \|T_{\mu}\|_p^p.
\] 
Since this holds for each one of the $m$ subsequences of $\{a_k\}$, it follows that 
\[
\sum_{k=1}^{\infty} \widehat{\mu}_r (a_k)^p \leq C m \|T_{\mu}\|_p^p.
\]
This completes the proof of the lemma.
\end{proof}

As a consequence of the two lemmas above, we see that $T_{\mu}\in S_p$ if and only if 
$\{\widehat{\mu}_r(a_k)\} \in l^p$ for some (every) $r$-lattice $\{a_k\}$. Associating with Theorem \ref{thm:main5}, we conclude the main result of the section. 

\begin{theorem}\label{thm:main3}
Suppose $0<p<1$, $r>0$ and $\mu\in\mathcal{M}_+$. Then the following conditions are equivalent:
\begin{enumerate}
  \item[(i)] $T_{\mu}\in S_p$.
  \item[(ii)] $\widehat{\mu}_r \in L^p(\calU,d\lambda)$.
  \item[(iii)] $\{\widehat{\mu}_r(a_k)\}\in l^p$ for every $r$-lattice $\{a_k\}$.
  \item[(iii)] $\{\widehat{\mu}_r(a_k)\}\in l^p$ for some $r$-lattice $\{a_k\}$.
\end{enumerate}
\end{theorem}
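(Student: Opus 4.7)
The plan is to assemble Theorem \ref{thm:main3} purely by combining results already established in the preceding sections. At this point in the paper, virtually all the analytic work has been done, so the proof reduces to a short bookkeeping argument. The two lemmas immediately preceding the theorem together furnish the hard implications involving the Schatten class condition (i), while Theorem \ref{thm:main5} already gives the full equivalence among the averaging-function conditions (ii), (iii) and (iv).

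First I would observe that conditions (ii), (iii) and (iv) are mutually equivalent by a direct appeal to Theorem \ref{thm:main5} (which asserts this equivalence for arbitrary choices of $r$, $s$, $\delta > 0$, including the case $r = s = \delta$). This closes the cycle on the purely measure-theoretic side, so all that remains is to tie condition (i) into this cycle.

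Next, for the implication (iv) $\Rightarrow$ (i), I would invoke the first of the two lemmas above the theorem statement: if there exists some $r$-lattice $\{a_k\}$ with $\{\widehat{\mu}_r(a_k)\} \in l^p$, then $T_\mu \in S_p$. Conversely, for (i) $\Rightarrow$ (iii), the second of the two preceding lemmas gives that $T_\mu \in S_p$ forces $\{\widehat{\mu}_r(a_k)\} \in l^p$ for \emph{every} $r$-lattice $\{a_k\}$. Combining these with the already-established equivalences (ii) $\Leftrightarrow$ (iii) $\Leftrightarrow$ (iv), the chain (iii) $\Rightarrow$ (iv) $\Rightarrow$ (i) $\Rightarrow$ (iii) is complete, and likewise one sees that (ii) is equivalent to all of them.

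There is no real obstacle in this proof since the substantive content lies in the two lemmas just proved (which involved the atomic decomposition of $A^2(\calU)$ and the diagonal/off-diagonal decomposition $T = D + E$ with the $C_R \to 0$ estimate). The only thing to watch is making sure the same lattice $\{a_k\}$ and the same radius $r$ are used consistently when quoting Theorem \ref{thm:main5}, but that theorem is stated with enough flexibility in $r, s, \delta$ that no bookkeeping issue arises. Thus the proof of Theorem \ref{thm:main3} should be a single short paragraph citing Theorem \ref{thm:main5} and the two preceding lemmas.
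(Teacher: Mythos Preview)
Your proposal is correct and matches the paper's own argument essentially verbatim: the paper concludes Theorem~\ref{thm:main3} in a single sentence by noting that the two preceding lemmas give (i) $\Leftrightarrow$ (iii)/(iv), and then invoking Theorem~\ref{thm:main5} for the equivalence of (ii), (iii), and (iv).
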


\section{The case $0<p<1$: Part II}\label{sec:p<1P2}

In this section we characterize the membership of $T_{\mu}$ in $S_p$ by integral properties of the Berezin transform $\widetilde{\mu}$ in the range of $0<p<1$. It turns out that this can not be done for the full range. We begin the section with showing the obstruction. 

If $\mu$ is any positive Borel measure on $\calU$, a use of \eqref{eqn:eqvltquan} shows that 
\begin{align*}
\widetilde{\mu}(z) &= \frac{n!}{4\pi^n} \int_{\calU} \frac{\bfrho(z)^{n+1}}{|\bfrho(z,w)|^{2(n+1)}} d\mu(w)\\
&\geq \frac{n!}{4\pi^n} \int_{D(\bfi,r)} \frac{\bfrho(z)^{n+1}}{|\bfrho(z,w)|^{2(n+1)}} d\mu(w)\\
&\geq C \mu(D(\bfi,r)) \frac{\bfrho(z)^{n+1}}{|\bfrho(z,\bfi)|^{2(n+1)}}\\
&\geq C_1 \frac{\bfrho(z)^{n+1}}{|\bfrho(z,\bfi)|^{2(n+1)}},
\end{align*}
where $C$ and $C_1$ are positive constants independent of $z$. Thus, an application of \eqref{eqn:keylem} implies that 
\[
\int_{\calU} \widetilde{\mu}(z)^p d\lambda(z) \geq C_1^p  \int_{\calU} \frac{\bfrho(z)^{p(n+1)-n-1}}{|\bfrho(z,\bfi)|^{2p(n+1)}} dV(z)=\infty
\]
whenever $p(n+1)\leq n$. Therefore, in the range $0<p\leq n/(n+1)$, it is not possible to characterize the membership of $T_{\mu}$ in $S_p$ in terms of the $L^p(\calU,d\lambda)$-properties of $\widetilde{\mu}$.
The following result shows that this is the only obstruction.

\begin{theorem}\label{thm:main4}
Suppose $\mu\in\mathcal{M}_+$ and 
\[
n/(n+1)<p<1.
\]
Then $T_{\mu} \in S_p$ if and only if $\widetilde{\mu}\in L^p(\calU,d\lambda)$.
\end{theorem}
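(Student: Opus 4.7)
By Theorem \ref{thm:main3} the condition $T_{\mu}\in S_p$ is equivalent to $\widehat{\mu}_r\in L^p(\calU,d\lambda)$, so the plan is to show that under the assumption $n/(n+1)<p<1$, the two statements
\[
\widehat{\mu}_r\in L^p(\calU,d\lambda)\qquad\text{and}\qquad\widetilde{\mu}\in L^p(\calU,d\lambda)
\]
are equivalent. One direction is essentially for free: the pointwise inequality $\widehat{\mu}_r(z)\leq C\widetilde{\mu}(z)$ was established in \eqref{eq:mu_rleqmu} and holds without any restriction on $p$, giving $\widetilde{\mu}\in L^p(\calU,d\lambda)\Rightarrow\widehat{\mu}_r\in L^p(\calU,d\lambda)$ and hence $T_{\mu}\in S_p$.

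The content of the theorem is therefore in the converse. Fix an $r$-lattice $\{a_k\}$ and cover $\calU$ by the balls $D(a_k,r)$. Since $|k_z(w)|^2$ and $\bfrho(w)$ are comparable on each $D(a_k,r)$ to $|k_z(a_k)|^2$ and $\bfrho(a_k)$ by \eqref{eqn:eqvltquan}, I would obtain
\[
\widetilde{\mu}(z)\ \leq\ \sum_{k}\int_{D(a_k,r)}|k_z(w)|^2\,d\mu(w)\ \leq\ C\sum_{k}\frac{\bfrho(z)^{n+1}\bfrho(a_k)^{n+1}}{|\bfrho(z,a_k)|^{2(n+1)}}\,\widehat{\mu}_r(a_k).
\]
Because $0<p<1$, the elementary inequality $(\sum b_k)^p\leq\sum b_k^p$ converts the above into
\[
\widetilde{\mu}(z)^p\ \leq\ C\sum_{k}\frac{\bfrho(z)^{p(n+1)}\bfrho(a_k)^{p(n+1)}}{|\bfrho(z,a_k)|^{2p(n+1)}}\,\widehat{\mu}_r(a_k)^p.
\]

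Now I integrate against $d\lambda(z)=\frac{n!}{4\pi^n}\bfrho(z)^{-n-1}\,dV(z)$ and swap sum and integral. The inner integral becomes
\[
\int_{\calU}\frac{\bfrho(z)^{p(n+1)-n-1}}{|\bfrho(z,a_k)|^{2p(n+1)}}\,dV(z),
\]
and this is precisely where the hypothesis $p>n/(n+1)$ enters and will be the main (and only serious) obstacle: to apply the key estimate \eqref{eqn:keylem} I need $t=p(n+1)-n-1>-1$, i.e.\ exactly $p>n/(n+1)$, while the condition $s-t>n+1$ is automatic. Assuming $p>n/(n+1)$, \eqref{eqn:keylem} evaluates the inner integral to $C\bfrho(a_k)^{-p(n+1)}$, the $\bfrho(a_k)^{p(n+1)}$ factors cancel, and I arrive at
\[
\int_{\calU}\widetilde{\mu}(z)^p\,d\lambda(z)\ \leq\ C\sum_{k}\widehat{\mu}_r(a_k)^p.
\]
By Theorem \ref{thm:main5} the right-hand side is finite as soon as $\widehat{\mu}_r\in L^p(\calU,d\lambda)$, which by Theorem \ref{thm:main3} is equivalent to $T_{\mu}\in S_p$. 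This completes the converse direction and hence the proof. The sharpness of the cutoff $n/(n+1)$ has already been demonstrated by the computation preceding the theorem statement, since the lower bound $\widetilde{\mu}(z)\geq C\bfrho(z)^{n+1}/|\bfrho(z,\bfi)|^{2(n+1)}$ forces $\widetilde{\mu}\notin L^p(\calU,d\lambda)$ whenever $p\leq n/(n+1)$, regardless of whether $T_{\mu}$ lies in $S_p$.
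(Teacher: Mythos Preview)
Your proof is correct and follows essentially the same route as the paper's: both directions use \eqref{eq:mu_rleqmu} for the easy implication, and for the converse both cover $\calU$ by an $r$-lattice, bound $\widetilde{\mu}$ by the discrete sum, apply the $0<p<1$ subadditivity inequality, integrate, and invoke \eqref{eqn:keylem} with the key observation that $p>n/(n+1)$ is exactly the condition $t=p(n+1)-n-1>-1$ needed there. Your write-up is in fact slightly more explicit than the paper's in verifying the hypotheses of \eqref{eqn:keylem}.
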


\begin{proof}
In view of \eqref{eq:mu_rleqmu}, the condition $\widetilde{\mu}\in L^p(\calU,d\lambda)$ implies that
$\widehat{\mu}_r \in L^p(\calU,d\lambda)$, which, by Theorem \ref{thm:main3}, 
implies that $T_{\mu} \in S_p$.

Next we assume that $T_{\mu} \in S_p$. Given an $r$-lattice $\{a_k\}$, according to Theorem \ref{thm:main3}, we have $\{\widehat{\mu}_r(a_k)\}\in l^p$. 
To show that $\widetilde{\mu}\in L^p(\calU,d\lambda)$, it suffices to prove that the $L^p(\calU,d\lambda)$-norm of $\widetilde{\mu}$ is dominated by a constant multiple of the $l^p$-norm of $\{\widehat{\mu}_r(a_k)\}$.
By \eqref{eqn:eqvltquan} and \eqref{eq:averaging}, there is positive constant $C$ such that
\begin{align*}
\widetilde{\mu}(z) &= \frac{n!}{4\pi^n} \int_{\calU} \frac{\bfrho(z)^{n+1}}{|\bfrho(z,w)|^{2(n+1)}} d\mu(w)\\
&\leq \frac{n!}{4\pi^n} \sum_{k=1}^{\infty} \int_{D(a_k,r)} \frac{\bfrho(z)^{n+1}}{|\bfrho(z,w)|^{2(n+1)}} d\mu(w)\\
&\leq C \sum_{k=1}^{\infty} \frac{\bfrho(z)^{n+1}}{|\bfrho(z,a_k)|^{2(n+1)}} \mu(D(a_k,r))\\
&\leq C \sum_{k=1}^{\infty} \frac{\bfrho(z)^{n+1}}{|\bfrho(z,a_k)|^{2(n+1)}} \bfrho(a_k)^{n+1} \widehat{\mu}_r(a_k).
\end{align*}
An application of H\"older's inequality leads to 
\[
\int_{\calU} \widetilde{\mu}(z)^p d\lambda(z)
\leq C \sum_{k=1}^{\infty} \bfrho(a_k)^{p(n+1)} \widehat{\mu}_r(a_k)^p
\int_{\calU} \frac{\bfrho(z)^{p(n+1)-n-1}}{|\bfrho(z,a_k)|^{2p(n+1)}} dV(z).
\]
Applying \eqref{eqn:keylem}, the integrability of right hand side of above inequality is guaranteed by the assumption that $p(n+1)>n$. Therefore, it follows that there is another positive constant $C$ such that 
\[
\int_{\calU} \widetilde{\mu}(z)^p d\lambda(z) \leq C \sum_{k=1}^{\infty} \widehat{\mu}_r(a_k)^p.
\]
This completes the proof of the theorem.
\end{proof}

Now, a combination of Theorem \ref{thm:main2}, Theorem \ref{thm:main3} and Theorem \ref{thm:main4} gives our main result which was stated as Theorem \ref{thm:main1} in the introduction.

\begin{remark}
At the end of the paper, it should be pointed out that the methods involved in this paper are also applicable to the weighted cases.
\end{remark}

%\section*{Data availability}
%Data sharing not applicable to this article as no datasets were generated or analysed during the current study.

\end{document}